\newtheorem{theorem}{Theorem}[section]
\newtheorem{corollary}[theorem]{Corollary}
\newtheorem{lemma}[theorem]{Lemma}
\newtheorem{prop}[theorem]{Proposition}
\theoremstyle{definition}
\newtheorem{definition}[theorem]{Definition}
\newtheorem{example}[theorem]{Example}
\newtheorem{conjecture}[theorem]{Conjecture}
\newenvironment{romenum}
{ 

\begin{enumerate}}{\end{enumerate}}
\newcommand{\N}{\mathbb{N}}
\newcommand{\Z}{\mathbb{Z}}
\newcommand{\R}{\mathbb{R}}
\newcommand{\C}{\mathbb{C}}
\newcommand{\F}{\mathbb{F}}
\newcommand{\CP}{\mathbb{CP}}
\renewcommand{\P}{\mathbb{P}}
\renewcommand{\k}{\Bbbk}
\newcommand{\RR}{\mathcal{R}}
\newcommand{\VV}{\mathcal{V}}
\newcommand{\A}{{\mathcal{A}}}
\newcommand{\B}{{\mathcal{B}}}
\newcommand{\Uc}{{\overline{U}}}
\newcommand{\bdU}{{\partial{\Uc}}}
\newcommand{\Fc}{{\overline{F}}}
\newcommand{\bdF}{{\partial{\Fc}}}
\DeclareMathAlphabet{\pazocal}{OMS}{zplm}{m}{n}
\newcommand{\XX}{{\pazocal X}}
\newcommand{\NN}{{\pazocal{N}}}
\DeclareMathOperator{\rank}{rank}
\DeclareMathOperator{\codim}{codim}
\DeclareMathOperator{\id}{id}
\DeclareMathOperator{\ab}{{ab}}
\DeclareMathOperator{\ch}{char}
\DeclareMathOperator{\Hom}{{Hom}}
\DeclareMathOperator{\depth}{depth}
\newcommand{\bo}{{\mathbf 1}}
\newcommand{\ii}{{\mathrm i}}
\newcommand{\surj}{\twoheadrightarrow}
\newcommand{\inj}{\hookrightarrow}
\newcommand{\abs}[1]{\left| #1 \right|}
\def\set#1{{\{ #1\}}}
\newcommand{\norm}[1]{|\!|#1|\!|}
\definecolor{dkgreen}{RGB}{0,100,0}
\definecolor{dkbrown}{RGB}{139,69,19}
\def\dot{\mathchar"013A}  
\newcommand{\hdot}{{\raise1pt\hbox to0.35em{\Huge $\dot$}}} 
\title[Milnor fibrations of hyperplane arrangements]{%
On the topology of the Milnor fibration of \\ a hyperplane arrangement} 
\author[Alexandru~I.~Suciu]{Alexandru~I.~Suciu}
\address{Department of Mathematics,
Northeastern University,
Boston, MA 02115, USA}
\email{a.suciu@neu.edu}
\urladdr{http://www.northeastern.edu/suciu/}
\thanks{Partially supported by the Simons Foundation 
collaboration grant for mathematicians 354156}
\subjclass[2010]{Primary
32S55,  
52C35;  
Secondary
05B35,  
14C21,  
14F35, 
32S22,  
55N25, 
57M10  
}
\keywords{Hyperplane arrangement, Milnor fibration, 
boundary manifold, resonance variety, characteristic variety, 
algebraic monodromy.}
\begin{document}

\begin{abstract}
This note is mostly an expository survey, centered on the 
topology of complements of hyperplane arrangements, 
their Milnor fibrations, and their boundary structures.  
An important tool in this study is provided by the degree~$1$  
resonance and characteristic varieties of the complement, 
and their tight relationship with orbifold fibrations and 
multinets on the underlying matroid.  In favorable situations, 
this approach leads to a combinatorial formula for the first 
Betti number of the Milnor fiber and the algebraic 
monodromy.  We also produce a pair of arrangements 
for which the respective Milnor fibers have the same Betti numbers, 
yet are not homotopy equivalent: the difference is picked up 
by isolated torsion points in the higher-depth characteristic varieties. 
\end{abstract}
\maketitle

\setcounter{tocdepth}{1}
\tableofcontents

\section{Introduction}
\label{sect:intro}

\subsection{The Milnor fibration}
\label{subsec:intro1}
A construction due to J.~Milnor \cite{Mi} associates 
to each homogeneous polynomial $Q\in \C[z_0,\dots , z_d]$ 
a fiber bundle, with base space $\C^*=\C\setminus \{0\}$, 
total space the complement in $\C^{d+1}$ to the hypersurface 
$V$ given by the vanishing of $Q$, and projection map 
$Q\colon \C^{d+1} \setminus V\to \C^*$.

The Milnor fiber $F=Q^{-1}(1)$ is a Stein manifold, and thus 
has the homotopy type of a finite, $d$-dimensional CW-complex. 
The monodromy of the fibration, $h\colon F\to F$, is given by 
$h(z)=e^{2\pi i/n} z$, where $n$ is the degree of $Q$. If the 
polynomial $Q$ has an isolated singularity at the origin, then 
$F$ is homotopy equivalent to a bouquet of $d$-spheres, whose 
number can be determined by algebraic means. In general, though, 
it is a rather hard problem to compute the homology groups of the 
Milnor fiber, even in the case when $Q$ completely factors into 
distinct linear forms. 

This situation is best described by a hyperplane arrangement, that is, 
a finite collection of codi\-mension-$1$ linear subspaces in $\C^{d+1}$. 
Choosing a linear form $f_H$ with kernel $H$ for each hyperplane $H$ 
in the arrangement $\A$, we obtain a homogeneous polynomial, 
$Q=\prod_{H\in \A} f_H$, which in turn defines the Milnor fibration 
of the arrangement, and the Milnor fiber, $F=F(\A)$.  A central 
question in the subject is to determine whether $\Delta_{\A}(t)$, the 
characteristic polynomial of the algebraic monodromy $h_*\colon 
H_1(F,\C)\to H_1(F,\C)$, is determined by the intersection lattice 
of the arrangement, $L(\A)$.  We present here some recent progress 
on this and other related questions, mostly based on  \cite{Su-toul} 
and on joint work with G.~Denham \cite{DeS-plms} and 
S.~Papadima \cite{PS-betamilnor}.

\subsection{Complement and jump loci}
\label{subsec:intro2}
Let $U$ be the complement of the complexified arrangement, $\bar\A=\P(\A)$.  
It turns out that the Milnor fiber 
$F$ is a regular, cyclic $n$-fold cover of $U$, where $n=\abs{\A}$. 
The classifying homomorphism for this cover, $\delta\colon \pi_1(U)\to \Z_n$, 
takes each meridian loop around a hyperplane to $1$.  Embedding $\Z_n$ 
into $\C^*$ by sending $1$ to a primitive $n$-th root of unity, we may 
view $\delta$ as a character on $\pi_1(U)$, see \cite{CS95, Su-toul}. 
The relative position of this character with respect to the characteristic 
varieties of $U$ determines the Betti numbers of $F$, as 
well as the characteristic polynomial of the algebraic monodromy.  

Since $U$ is a smooth, quasi-projective variety, its characteristic 
varieties are finite unions of torsion-translates of algebraic subtori 
of the character group $\Hom(\pi_1(U),\C^*)$, cf.~\cite{Ar, ACM, BW}.  
Since $U$ is also a formal space, its resonance varieties (defined 
in terms of the Orlik--Solomon algebra of $\A$) coincide 
with the tangent cone at the origin to the corresponding 
characteristic varieties, cf.~\cite{CS99, Li01, DPS-duke, DP-ccm}.  
As shown by Falk and Yuzvinsky \cite{FY}, the degree $1$ resonance 
varieties may be described solely in terms of multinets on sub-arrangements 
of $\A$.  In general, though, the degree $1$ characteristic varieties 
of an arrangement may contain components which do not pass through 
the origin \cite{S02, DeS-plms}, and it is still an open problem whether 
such components are combinatorially determined.

Under simple combinatorial conditions, it is shown in \cite{PS-betamilnor} 
that the multiplicities of the factors of $\Delta_{\A}(t)$ corresponding 
to certain eigenvalues of order a power of a prime $p$ are equal 
to the `Aomoto--Betti numbers' $\beta_p(\A)$, which in turn can be  
extracted from the intersection $L(\A)$ by considering the resonance 
varieties of $U$ over a field of characteristic $p$.  When $\bar\A$ is  
an arrangement of projective lines with only double and triple points, 
this approach leads to a combinatorial formula for the algebraic monodromy. 

\subsection{Boundary structures}
\label{subsec:intro3}

Both the projectivized complement $U$ and the Milnor fiber $F$ 
are boundaryless, non-compact manifolds.   
Removing a regular neighborhood of the arrangement 
yields a compact manifold with boundary, $\Uc$, onto which $U$  
deform-retracts.  Likewise, intersecting the Milnor fiber with a ball 
in $\C^{d+1}$ centered at the origin yields a compact manifold with 
boundary, $\Fc$, onto which $F$ deform-retracts.  

We focus on the case $d=2$, when both the boundary manifold 
of the arrangement, $\bdU$, and the boundary of the Milnor fiber, 
$\bdF$, are closed, orientable, $3$-dimensional graph manifolds. 
Once again, there is a regular, cyclic $n$-fold cover $\bdF\to \bdU$, 
whose classifying map can be described in concrete terms.
Various topological invariants of these manifolds, including 
the cohomology ring and the depth $1$ characteristic variety 
of $\bdU$ \cite{CS06, CS08}, as well as the Betti numbers of 
$\bdF$ \cite{NS}, can be computed from the combinatorics of $\A$.  

In \cite{Fa93}, Falk produced a pair arrangements, $\A$ and $\A'$, 
for which the intersection lattices are non-isomorphic, but the projective 
complements, $U$ and $U'$, are homotopy equivalent.  Nevertheless, 
the boundary manifolds are not homotopy equivalent, \cite{JY93, CS08}, 
and thus the complements are not homeomorphic.  We show here that 
the respective Milnor fibers, $F$ and $F'$, as well as their boundaries, 
$\bdF$ and $\bdF'$, have the same Betti numbers, but that $F\not\simeq F'$. 
The difference between the two Milnor fibers is detected by the depth $2$ 
characteristic varieties:  $\VV_2(F)=\{\bo\}$, whereas 
 $\VV_2(F') \cong \Z_3$.

\section{Complement, boundary manifold, and Milnor fibration}
\label{sect:milnor}

\subsection{The complement of a hyperplane arrangement}
\label{subsec:hyp arr}

An {\em arrangement of hyperplanes}\/ is a finite set $\A$ of 
codimension-$1$ linear subspaces in a finite-dimensional, 
complex vector space $\C^{d+1}$.  The combinatorics of the 
arrangement is encoded in its {\em intersection lattice}, $L(\A)$,   
that is, the poset of all intersections of hyperplanes in $\A$ (also 
known as flats), ordered by reverse inclusion, and ranked by 
codimension.  Given a flat $X$, we will denote by $\A_X$ the 
sub-arrangement $\{H\in \A\mid H\supset X\}$. 

Without much loss of generality, we will assume throughout 
that the arrangement is {\em central}, that is, all the hyperplanes 
pass through the origin.   For each hyperplane $H\in \A$, let 
$f_H\colon \C^{d+1} \to \C$ be a linear form with kernel $H$. The product 
\begin{equation}
\label{eq:qa}
Q(\A)=\prod_{H\in \A} f_H,
\end{equation}
then, is a defining polynomial for the arrangement, 
unique up to a (non-zero) constant factor. 
Notice that $Q=Q(\A)$ is a homogeneous polynomial 
of degree equal to $\abs{\A}$, the cardinality of the set $\A$. 
The complement of the arrangement, 
\begin{equation}
\label{eq:comp}
M(\A)=\C^{d+1}\setminus \bigcup_{H\in\A}H,
\end{equation} 
is a connected, smooth complex quasi-projective variety.  Moreover,  
$M=M(\A)$ is a Stein manifold, and thus it has the homotopy type of a 
CW-complex of dimension at most $d+1$.  In fact, $M$ splits 
off the linear subspace  $\bigcap_{H\in \A} H$. 
Thus, we may safely assume that the arrangement 
$\A$ is {\em essential}, i.e., that this subspace is just $0$.

The group $\C^*$ acts freely on $\C^{d+1}\setminus \set{0}$ via 
$\zeta\cdot (z_0,\dots,z_{d})=(\zeta z_0,\dots, \zeta z_{d})$. 
The orbit space is the complex projective space of dimension $d$, 
while the orbit map, $\pi\colon \C^{d+1}\setminus \set{0} \to \CP^{d}$,  
$z \mapsto [z]$, 
is the Hopf fibration. The set  $\P(\A)=\set{\pi(H)\colon H\in \A}$ is an 
arrangement of codimension $1$ projective subspaces in $\CP^{d}$. 
Its complement, $U=U(\A)$, coincides with the quotient 
$\P(M)=M/\C^*$. 

The Hopf map restricts to a bundle map, $\pi\colon M\to U$, with fiber 
$\C^{*}$. Fixing a hyperplane $H\in \A$, we see that $\pi$ is 
also the restriction to $M$ of the bundle map 
$\C^{d+1}\setminus H\to \CP^{d} 
\setminus \pi(H) \cong \C^{d}$.  This latter bundle 
is trivial, and so we have a diffeomorphism  
$M \cong U\times \C^*$.

Fix now an order  $H_1,\dots ,H_n$ on the hyperplanes of $\A$, 
and denote the corresponding linear forms by $f_1,\dots , f_n$. 
We may then define a linear map $\iota\colon \C^{d+1}\to\C^n$  by 
$\iota(z)=(f_1(z), \dots ,f_n(z))$.   Since we assume $\A$ is essential, 
the map $\iota$ is injective. Its restriction to the complement yields an 
embedding $\iota\colon M\to (\C^*)^n$.  As shown in 
\cite{DeS-plms, Su-toul}, this embedding is is a classifying 
map for the universal abelian cover $M^{\ab}\to M$.

Clearly, the map $\iota\colon M\to(\C^*)^n$ 
is equivariant with respect to the diagonal action of $\C^*$ 
on both source and target.  Thus, $\iota$ descends to a map 
$\overline{\iota}\colon M/\C^*\to (\C^*)^n/\C^*$.  Since 
$\A$ is essential, this map  defines an embedding 
$\overline{\iota}\colon U \inj(\C^*)^{n-1}$, which is 
a classifying map for the universal abelian cover 
$U^{\ab}\to U$.

\subsection{The boundary manifold}
\label{subsec:bdry nbhd}
Let $V$ be the union of the hyperplanes in $\A$,  
and let $W=\P(V)$.  A regular neighborhood  of the algebraic 
hypersurface $W\subset \CP^{d}$ may be constructed as follows. 
Let $\phi\colon\CP^{d} \to \R$ be the smooth function 
defined by $\phi([z]) = \abs{Q(z)}^2 / \norm{z}^{2n}$, where 
$Q$ is a defining polynomial for the arrangement, 
and $n=\abs{\A}$. Then, for sufficiently small $\delta>0$, 
the space $\nu(W)=\phi^{-1} ([0,\delta])$ is a closed, regular 
neighborhood of $W$.   Alternatively, one may triangulate 
$\CP^{d}$ with $W$ as a subcomplex, and take $\nu(W)$ 
to be the closed star of $W$ in the second barycentric 
subdivision.  

As shown by Durfee \cite{Durfee}, these constructions yield 
isotopic neighborhoods, independent of the choices made.  
Plainly, $\nu(W)$ is a compact, orientable, smooth 
manifold with boundary, of dimension $2d$; 
moreover, $\nu(W)$ deform-retracts onto $W$. 
The  {\em exterior}\/ of the projectivized arrangement, 
denoted by $\Uc$, is the complement in $\CP^{d}$ 
of the  open regular neighborhood $\operatorname{int}(\nu(W))$. 
It is readily seen that $\Uc$ is a compact, connected, orientable, smooth 
$2d$-manifold with boundary, and that $U$ deform-retracts 
onto $\Uc$.

The \emph{boundary manifold}\/ of the arrangement 
$\A$ is the common boundary $\bdU= \partial \nu(W)$ 
of the exterior $\Uc$ and the regular neighborhood 
of $W$ defined above.  Clearly, $\bdU$ is a compact, 
orientable, smooth manifold of dimension $2d-1$.   
The inclusion map $\bdU \to \Uc$ is a  
$(d-1)$-equivalence, see Dimca \cite[Prop.~2.31]{Di92};  
in particular, $\pi_i(\bdU) \cong \pi_i(U)$ for $i<d-1$. 
Thus, $\bdU$ is connected if $d\ge 2$, and 
$\pi_1(\bdU)=\pi_1(U)$ if $d\ge 3$.   For more 
information on the boundary manifolds of arrangements, 
we refer to \cite{Hi01, CS06, CS08, Su-toul, KS}. 

\subsection{The Milnor fibration}
\label{sect:milnor fibration}

Once again, let $\A$ be a central arrangement of $n$ hyperplanes 
in $\C^{d+1}$.  
The polynomial map $Q=Q(\A)\colon \C^{d+1} \to \C$ restricts 
to a map $Q\colon M(\A) \to \C^{*}$, where 
$M=M(\A)$ is the complement of the arrangement. 
As shown by J.~Milnor \cite{Mi} in a more general context, this map 
is the projection map of a smooth, locally trivial bundle, 
known as the {\em Milnor fibration}\/ of the 
arrangement.  The typical fiber of this fibration, 
\begin{equation}
\label{eq:fa}
F(\A)=Q^{-1}(1)
\end{equation}
is called the {\em Milnor fiber}\/ of the arrangement.  
It is readily verified that $F=F(\A)$ is a smooth, connected, 
orientable manifold of dimension $2d$. 
Moreover, $F$ is a Stein domain of complex dimension $d$, 
and thus has the homotopy type of a finite CW-complex 
of dimension $d$. 

\begin{figure}
\[
\begin{tikzpicture}[baseline=(current bounding box.center),scale=0.66]  
\clip (0,0) circle (3.2);
\foreach \a in {0, 60,...,359}
      \draw(0, 0) -- (\a:8);
\draw[style=thick,color=red, scale=1,
domain=1:2.5,smooth,variable=\y] 
plot ({ 0.5*sqrt (\y^3-1)/sqrt(\y) },{\y});
\draw[style=thick,color=red, scale=1,
domain=1:2.5,smooth,variable=\y] 
plot ({ -0.5*sqrt (\y^3-1)/sqrt(\y) },{\y});
\draw[style=thick,color=red, scale=1.6,
domain=-1.25:-0.08,smooth,variable=\y] 
plot ({ 0.5*sqrt (1-\y^3)/sqrt(-\y) },{\y});
\draw[style=thick,color=red, scale=1.6,
domain=-1.25:-0.08,smooth,variable=\y] 
plot ({ -0.5*sqrt (1-\y^3)/sqrt(-\y) },{\y});
 \node at (0.7,1.5) [label=right:$\A$] {};
 \node at (1,-1) [label=right:$F(\A)$] {};
\end{tikzpicture}
\hspace*{0.7in}
\begin{tikzpicture}[baseline=(current bounding box.center),scale=0.66]   
        \draw (-1,0) to[bend left] (1,0);
        \draw (-1.2,.1) to[bend right] (1.2,.1);
        \draw[rotate=0] (0,0) ellipse (100pt and 50pt); 
        \node[circle,draw,inner sep=1pt,fill=black] at (2.2,0)  {};
        \node[circle,draw,inner sep=1pt,fill=black] at (-1.1,0.9)  {};
        \node[circle,draw,inner sep=1pt,fill=black] at (-1.1,-0.9)  {};
         \draw[->,>=stealth,shorten >=1pt,dashed] (1.9, 0.4) to[bend right] (-0.6,1);
         \node at (0.75,1.2) [label=right:$h$] {};
         \node at (1,-2) [label=right:$F(\A)$] {};
\end{tikzpicture}
\]
\caption{Milnor fiber and monodromy for a pencil of $3$ lines} 
\label{fig:mf pencil} 
\end{figure}
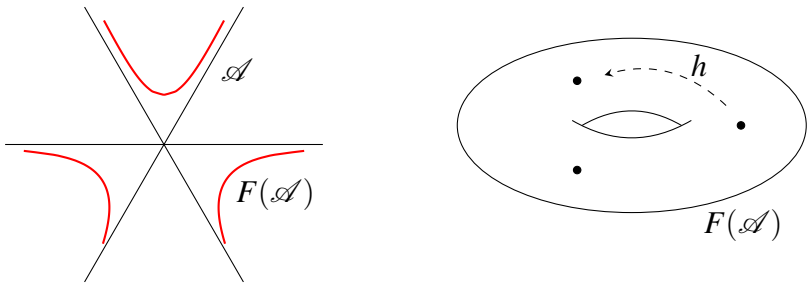

For each  $\theta\in [0,1]$, let us denote by $F_{\theta}$ the 
fiber over the point $e^{2\pi \ii \theta}\in \C^*$, so that $F_0=F_1=F$. 
For each point $z\in M$, the path $\gamma_{\theta}\colon [0,1] \to \C^*$, 
$t\mapsto e^{2\pi \ii t \theta}$ lifts to the path 
$\tilde\gamma_{\theta,z}\colon [0,1] \to M$ given by 
$t\mapsto e^{2\pi \ii t \theta/n} z$. Clearly,   
$Q(\tilde\gamma_{\theta,z}(1))= e^{2\pi \ii  \theta} Q(z)$.  
Therefore, if $z\in F_{0}$, then $\tilde\gamma_{\theta,z}(1)\in F_{\theta}$; 
moreover, $\tilde\gamma_{\theta,z}(0)=z$. 

By definition, the {\em monodromy}\/ of the Milnor fibration 
is the diffeomorphism $h\colon F_0\to F_1$ given by 
$h(z)=\tilde\gamma_{1,z}(1)$.  In view of the preceding  
discussion, this diffeomorphism can be written as 
$h\colon F\to F$, $z \mapsto e^{2\pi \ii/n} z$.  Clearly, 
$h$ has order $n$, and the complement $M$ is 
homotopy equivalent to the mapping torus of $h$.  

By homogeneity of the polynomial $Q$, 
we have that $Q(wz)=w^n Q(z)$, for every $z\in M$ 
and $w\in \C^{*}$. Thus, the restriction of $Q$ to a fiber of 
the Hopf bundle map $\pi\colon M\to U$ may be identified with 
the covering projection $q\colon \C^{*}\to \C^{*}$, $q(w)=w^n$.  
Now, if both $z$ and $wz$ belong to $F$, then $Q(z)=Q(wz)=1$, 
and so $w^n=1$. Thus, the restriction 
\begin{equation}
\label{eq:fucover}
\pi\colon F(\A) \to U(\A)
\end{equation}
is the orbit map of the free action of the geometric monodromy on 
$F(\A)$.   Hence, the Milnor fiber $F(\A)$ may be viewed as a regular, cyclic 
$n$-fold cover of the projectivized complement $U(\A)$, 
see for instance \cite{OR93, CS95, Su-toul}.  

\begin{example} 
\label{ex:mf boolean}
Let $\B_n$ be the Boolean arrangement in $\C^{n}$.   
Upon identifying the complement $M(\B_n)$ with the algebraic torus 
$(\C^*)^n$, we see that the map $Q(\B_n)\colon (\C^*)^n \to \C^*$, 
$z\mapsto z_1\cdots z_{n}$ is a morphism of complex algebraic groups. 
Hence, the Milnor fiber $F(\B_n)= \ker (Q(\B_n))$ 
is an algebraic subtorus, which is isomorphic to $(\C^*)^{n-1}$.  
The monodromy automorphism 
$h$ is isotopic to the identity, via the isotopy 
$h_t(z) = e^{2\pi \ii t/n} z$.  Thus, the Milnor fibration 
of the Boolean arrangement is trivial.
\end{example}  
 
As noted in \cite{Su-toul}, the map 
$\iota\colon M(\A)\inj M(\B_n)$ is compatible with the 
Milnor fibrations $Q(\A)\colon M(\A)\to \C^*$ 
and $Q(\B_n)\colon M(\B_n)\to \C^*$. It follows 
that the Milnor fiber $F(\A)$ may be obtained by intersecting the 
complement $M(\A)$, viewed as a subvariety of the algebraic 
torus $M(\B_n)=(\C^*)^n$ via the inclusion $\iota$, 
with $F(\B_n)\cong (\C^*)^{n-1}$, 
viewed as an algebraic subgroup of $(\C^*)^n$; that is, 
\begin{equation}
\label{eq:mfa}
F(\A) = M(\A) \cap F(\B_n).
\end{equation}

\subsection{The closed Milnor fiber and its boundary}
\label{subsec:bdF}

As before, let $\A$ be a (central) arrangement of hyperplanes 
in $\C^{d+1}$.  Intersecting the Milnor fiber $F(\A)$ with  
a ball in $\C^{d+1}$ of large enough radius, we obtain a 
compact, smooth, orientable $2d$-dimensional 
manifold with boundary,
\begin{equation}
\label{eq:fc}
\Fc(\A) = F(\A)\cap D^{2(d+1)}, 
\end{equation}
which we call the {\em closed Milnor fiber}\/ of the arrangement.  
The {\em boundary of the Milnor fiber}\/ of the arrangement $\A$ is   
the compact, smooth, orientable, $(2d-1)$-dimensional manifold
\begin{equation}
\label{eq:bdfma}
\bdF(\A) = F(\A)\cap S^{2d+1}.
\end{equation}

\begin{figure}
\centering
\begin{tikzpicture}[baseline=(current bounding box.center),scale=0.7]  
\clip (0,0) circle (3);
\draw[ style=thick, color=blue ] (-3,0) -- (3,0);
\draw[ style=thick, color=dkgreen] (0,-3) -- (0,3);
\draw[ ] (0,0) circle (2.4);
\draw[style=thick,color=orange, scale=1, domain=0.1:3,smooth,variable=\x] 
plot ({ \x },{1/\x});
\draw[style=thick,color=orange, scale=1, domain=-3:-0.1,smooth,variable=\x] 
plot ({ \x },{1/\x});
\end{tikzpicture}
\hspace*{0.3in}
\begin{tikzpicture}[baseline=(current bounding box.center),scale=0.72]  
\begin{scope}[even odd rule]
   \clip (0,0) circle(2.4) (1.8,0) circle (2.4);
   \fill[orange!40] (0,0) circle (2.4) (1.8,0) circle(2.4);
\end{scope}
\draw[dkgreen, very thick] (0,0)circle(2.4);
\draw[dkgreen!20!white, very thick] (-63:2.4) arc (-63:-73:2.4);
\draw[blue, very thick] (1.8,0)circle(2.4);
\draw[blue!20!white, very thick] (1.8,0)+(107:2.4) arc (107:117:2.4);
\draw[dkgreen, very thick] (63:2.4) arc (63:73:2.4); 
\end{tikzpicture}
\caption{Local Milnor fibration and closed Milnor fiber for $2$ lines} 
\label{fig:mf local} 
\end{figure}
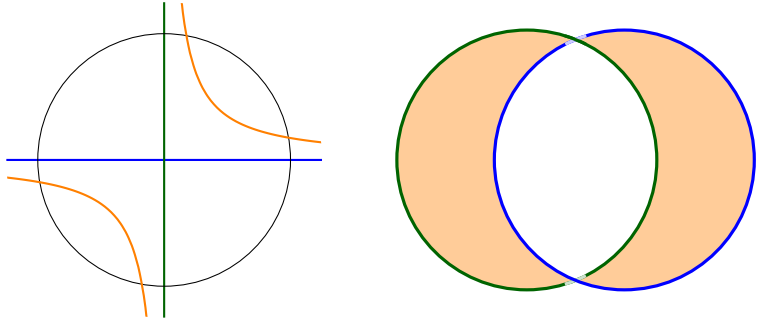

We will drop $\A$ from the notation when the arrangement is understood. 
As noted in \cite[Proposition 2.4]{Di92}, the pair $(\Fc, \bdF)$ 
is $(d-1)$-connected. In particular, if $d\ge 2$, the boundary 
of the Milnor fiber is connected, and the inclusion-induced 
homomorphism $\pi_1(\bdF)\to \pi_1(\Fc)$ is surjective. 
Furthermore, as shown in \cite[Lemma 7.5]{Su-toul}, the map 
$\pi \colon \C^{d+1} \setminus \set{0} \to \CP^{d}$ 
restricts to regular, cyclic $n$-fold covers, $\pi\colon \Fc \to \Uc$ 
and $\pi\colon \bdF \to \bdU$. 
In summary, we have a commuting ladder
\begin{equation}
\label{eq:cd}
\xymatrix{
\Z_n \ar@{=}[r] \ar[d] & \Z_n \ar@{=}[r] \ar[d] & \Z_n \ar[r] \ar[d] &
\C^* \ar@{=}[r] \ar[d] & \C^* \ar[d] \\
\bdF \ar^{\pi}[d] \ar[r]&  \Fc \ar^{\pi}[d] \ar^{\simeq}[r]&  F \ar^{\pi}[d] \ar[r]&  
M \ar[r] \ar^{\pi}[d] & \C^{d+1} \setminus\{0\} \ar^{\pi}[d] \\
\bdU  \ar[r]& \Uc  \ar^{\simeq}[r]& U  \ar@{=}[r]&  U \ar[r] & \CP^{d}
}
\end{equation}
where the horizontal arrows are inclusions, and the 
maps denoted by $\pi$ are principal bundles with 
fiber either $\Z_n$ or $\C^*$, as indicated.

\subsection{Classifying homomorphisms for cyclic covers}
\label{subsec:classify}

As before, let $\A$ be a central arrangement in $\C^{d+1}$, 
and set $n=\abs{\A}$.  
Fix a basepoint for the complement $M=M(\A)$.  
For each $H\in \A$, let $x_H$ denote the based 
homotopy class of a compatibly oriented meridian curve 
about the hyperplane $H$. A standard application of the 
van Kampen theorem shows that the fundamental group 
$\pi_1(M)$ is generated by these elements. To simplify  
notation, we will denote the image of $x_H$ in $H_1(M,\Z)$ 
by the same symbol.   Similarly, we will denote by 
$\overline{x}_H$ the image of $x_H$ in both 
$\pi_1(U)$ and its abelianization.  We then 
have that $H_1(M,\Z)$ is isomorphic to $\Z^n$, 
with basis $\set{x_H\colon H\in\A}$, and 
$H_1(U,\Z)$ is isomorphic to the quotient of 
$\Z^n$ by the cyclic subgroup generated by $\sum_{H\in\A} x_H$. 

Let $Q$ be a defining polynomial for $\A$, 
and let $Q\colon M\to \C^*$ be the Milnor fibration.  
By \cite[Prop.~4.6]{Su-toul}, the induced homomorphism 
$Q_{\sharp} \colon \pi_1(M)\to \pi_1(\C^*)=\Z$  
sends each generator $x_H$ to $1$. 
Recall that the Hopf fibration restricts 
to a regular, cyclic $n$-fold cover $\pi \colon F\to U$.  
As shown for instance in \cite{CS95, CDS03, Su-conm, Su-toul}, 
this cover is classified by the homomorphism 
$\delta\colon \pi_1(U) \surj \Z_n$ given by $\overline{x}_H\mapsto 1$.
If $d\ge 3$, we know that  $\pi_1(\bdU)=\pi_1(U)$, and so   
the $n$-fold cover $\pi\colon \bdF \to \bdU$ is classified by 
the same epimorphism $\delta$.  

In the critical case 
$d=2$, the $3$-dimensional manifold $\bdU$ is a graph manifold, 
with underlying graph $\Gamma$ the bipartite graph whose vertices 
correspond to the lines and the intersection points of the projectivized 
line arrangement in $\CP^2$, and with edges $(\ell,P)$ joining a line 
vertex $\ell$ to a point vertex $P$ if $P\in \ell$; see Figure \ref{fig:3lines} 
for an illustration.  Furthermore, each 
vertex manifold is the product of $S^1$ with a sphere $S^2$ 
with a number of open $2$-disks removed.  For more details 
on this construction we refer to  \cite{JY93, JY98, Hi01, CS08, KS}. 

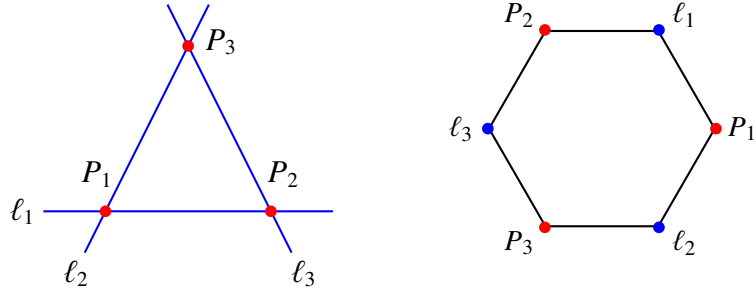
\begin{figure}
\centering
\begin{tikzpicture}[scale=0.55]
\draw[blue, style=thick] (-0.5,3) -- (2.5,-3);
\draw[blue, style=thick]  (0.5,3) -- (-2.5,-3);
\draw[blue, style=thick] (-3.5,-2) -- (3.5,-2);
\fill[red] (-2,-2) circle (4pt);  
\fill[red] (2,-2) circle (4pt);  
\fill[red] (0,2) circle (4pt);   
\node at (-4,-2) {$\ell_1$}; 
\node at (-2.7,-3.5) {$\ell_2$};
\node at (2.8,-3.5) {$\ell_3$};
\node at (-2.2,-1.1) {$P_1$}; 
\node at (2.3,-1.1) {$P_2$};
\node at (0.8,2.1) {$P_3$}; 
\node [draw, thick, minimum size=3cm, regular polygon, regular polygon sides=6]  (a) at (10,0) {};
\foreach \x in {1,3,5}
\fill[blue]  (a.corner \x) circle[radius=4pt];
\foreach \x in {2,4,6}
\fill[red] (a.corner \x) circle[radius=4pt];
\node[align=right, above] at (12,2.2) {\small $\ell_1$};
\node[align=right, below] at (12,-2.2) {\small $\ell_2$};
\node[align=left] at (6.6,0) {\small $\ell_3$};
\node[align=left, above] at (8,2.2) {\small $P_2$};
\node[align=left, below] at (8,-2.2) {\small $P_3$};
\node[align=right] at (13.4,0) {\small $P_1$};  
\end{tikzpicture}
\caption{An arrangement of lines and its associated graph}
\label{fig:3lines}
\end{figure}

The group $\pi_1(\bdU)$, then, has generators $\overline{x}_H$  
for $H\in \A$ and generators $y_c$ corresponding to the cycles in $\Gamma$. 
As shown in \cite[Prop.~7.6]{Su-toul}, the regular $\Z_n$-cover $\pi\colon \bdF \to \bdU$ 
is classified by the homomorphism
$\bar\delta\colon \pi_1(\bdU) \surj \Z_n$ 
given by $\overline{x}_H\mapsto 1$ and $y_c \mapsto 0$.

\section{Multinets and pencils}
\label{sect:multipen}

\subsection{Multinets}
\label{subsec:multinets}

For our purposes here, it will be enough to assume that 
the arrangement $\A$ lives in $\C^3$, in which case $\bar\A=\P(\A)$ is an 
arrangement of (projective) lines in $\CP^2$. This is clear when the 
rank of $\A$ is at most $2$, and may be achieved otherwise 
by taking a generic $3$-slice. This operation does not
change the poset $L_{\le 2}(\A)$, nor does it change 
the monodromy action on $H_1(F(\A),\C)$. 

For a rank-$3$ arrangement, the set $L_1(\A)$ is in $1$-to-$1$ correspondence 
with the lines of $\bar\A$, while $L_2(\A)$ is in $1$-to-$1$ correspondence 
with the intersection points of $\bar\A$.  Moreover, the poset structure of $L_{\le 2}(\A)$ 
mirrors the incidence structure of the point-line configuration $\bar\A$. 
We will say that a rank-$2$ flat $X$ has multiplicity $q$ if 
$\abs{\A_X}=q$, or, equivalently, if the point $\P(X)$ has 
exactly $q$ lines from $\bar\A$ passing through it.   The 
following notion, due to Falk and Yuzvinsky \cite{FY}, will play 
in an important role in the sequel. 

\begin{definition}[\cite{FY}]
\label{def:multinet}
A {\em multinet}\/ $\NN$ on an arrangement $\A$ consists 
of the following data:
\begin{romenum}
\item An integer $k\ge 3$, and a partition of $\A$ 
into $k$ subsets, say, $\A_1,\ldots,\A_k$.
\item An assignment of multiplicities on the hyperplanes, 
$m\colon \A\to \N$.
\item A subset $\XX\subseteq L_2(\A)$, called the base locus.
\end{romenum}
Moreover, the following conditions must be satisfied:
\begin{enumerate}
\item  \label{m1} 
There is an integer $d$ such that $\sum_{H\in\A_i} m_H=d$, 
for all $i\in [k]$.
\item  \label{m2} 
For any two hyperplanes $H$ and $K$ in different classes,  
$H\cap K\in \XX$.
\item  \label{m3} 
For each $X\in\XX$, the sum 
$n_X:=\sum_{H\in\A_i\colon H\supset X} m_H$ is independent of $i$.
\item  \label{m4} 
For each $i\in [k]$, the space 
$\big(\bigcup_{H\in \A_i} H\big) \setminus \XX$ is connected. 
\end{enumerate}
\end{definition}

\begin{figure}
\centering
\begin{tikzpicture}[scale=0.6]
\draw[style=thick,densely dashed,color=blue] (-0.5,3) -- (2.5,-3);
\draw[style=thick,densely dotted,color=red]  (0.5,3) -- (-2.5,-3);
\draw[style=thick,color=dkgreen] (-3,-2) -- (3,-2);
\draw[style=thick,densely dotted,color=red]  (3,-2.68) -- (-2,0.68);
\draw[style=thick,densely dashed,color=blue] (-3,-2.68) -- (2,0.68);
\draw[style=thick,color=dkgreen] (0,-3.1) -- (0,3.1);
\end{tikzpicture}
\hspace*{0.3in}
\begin{tikzpicture}[scale=0.6]
\draw[style=thick,color=dkgreen] (0,0) circle (3.1);
\node at (-2.4,0.4){2}; 
\node at (0,-2.6){2}; 
\node at (3.4,0.5){2}; 
\clip (0,0) circle (2.9);
\draw[style=thick,densely dashed,color=blue] (-1,-2.1) -- (-1,2.5);
\draw[style=thick,densely dotted,color=red] (0,-2.2) -- (0,2.5);
\draw[style=thick,densely dashed,color=blue] (1,-2.1) -- (1,2.5);
\draw[style=thick,densely dotted,color=red] (-2.5,-1) -- (2.5,-1);
\draw[style=thick,densely dashed,color=blue] (-2.5,0) -- (2.5,0);
\draw[style=thick,densely dotted,color=red] (-2.5,1) -- (2.5,1);
\draw[style=thick,color=dkgreen]  (-2,-2) -- (2,2);
\draw[style=thick,color=dkgreen](-2,2) -- (2,-2);
\end{tikzpicture}
\hspace*{0.3in}
\begin{tikzpicture}[scale=0.6]
\draw[style=thick,densely dotted,color=red] (0,0) circle (3.1);
\clip (0,0) circle (2.9);
\draw[style=thick,densely dotted,color=red] (0,-2.8) -- (0,2.8);  
\draw[style=thick,densely dotted,color=red] (-2.6,-1) -- (2.6,-1); 
\draw[style=thick,densely dotted,color=red] (-2.6,1) -- (2.6,1); 
\draw[style=thick,densely dashed,color=blue] (-0.5,-2.7) -- (-0.5,2.7); 
\draw[style=thick,densely dashed,color=blue] (1.5,-2.4) -- (1.5,2.4); 
\draw[style=thick,densely dashed,color=blue]  (-2.5,-2) -- (2.2,2.7); 
\draw[style=thick,densely dashed,color=blue](-2.2,1.7) -- (2.2,-2.7); 
\draw[style=thick,color=dkgreen] (-1.5,-2.4) -- (-1.5,2.4);  
\draw[style=thick,color=dkgreen] (0.5,-2.7) -- (0.5,2.7);  
\draw[style=thick,color=dkgreen]  (-1.7,-2.2) -- (2.2,1.7);  
\draw[style=thick,color=dkgreen](-2,2.5) -- (2.2,-1.7);  
\end{tikzpicture}
\caption{A $(3,2)$-net; a $(3,4)$-multinet; and a reduced $(3,4)$-multi\-net 
which is not a $3$-net}
\label{fig:multinets}%
\end{figure}
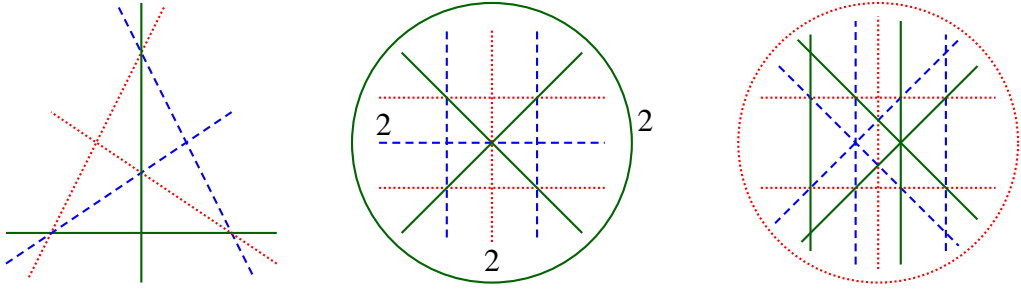

We say that a multinet  as above has $k$ classes and weight $d$, 
and refer to it as a $(k,d)$-multinet, or simply as a $k$-multinet.    
Without essential loss of generality, we may assume that $\gcd\{m_H\}_{H\in \A}=1$.  
If all the multiplicities are equal to $1$, the multinet is said to be {\em reduced}. 
If, furthermore, every flat in $\XX$  is contained in precisely one 
hyperplane from each class, the multinet is called 
a {\em $(k,d)$-net}. 

The various possibilities are illustrated in Figure \ref{fig:multinets}. 
The first picture shows a $(3,2)$-net on a planar slice 
of the reflection arrangement of type ${\rm A}_3$.  
The second picture shows a non-reduced $(3,4)$-multinet 
on a planar slice of the reflection arrangement of type ${\rm B}_3$.  
Finally, the third picture shows a simplicial arrangement of $12$ 
lines in $\CP^2$ supporting a reduced $(3,4)$-multinet which is not a $3$-net. 

Work of Yuzvinsky \cite{Yu04, Yu09} and Pereira--Yuzvinsky \cite{PY} 
shows that, if $\NN$ is a $k$-multinet on an arrangement $\A$, 
with base locus of size greater than $1$, then $k=3$ 
or $4$; moreover, if the multinet $\NN$ is not reduced, then $k=3$. 
Although several infinite families of multinets with $k=3$ are known, 
only one multinet with $k=4$ is known to exist: the $(4,3)$-net 
on the Hessian arrangement.  For more examples and further 
discussion, we refer to \cite{BYu, FY, TY, Yo}.

As noted in \cite[Lemma 2.1]{PS-betamilnor}, 
if $\A$ has no $2$-flats of multiplicity $kr$, 
for any $r>1$, then every reduced $k$-multinet on $\A$ is a $k$-net. 
The next lemma provides an alternative definition of nets. 

\begin{lemma}[\cite{PS-betamilnor}]
\label{lem:lsq}
A $k$-net on an arrangement $\A$ is a partition with non-empty blocks,
$\A =\coprod_{\alpha \in [k]} \A_{\alpha}$, with the property that, 
for every $H\in \A_{\alpha}$ and $K\in \A_{\beta}$ with $\alpha \ne \beta$ 
we have that $\abs{H\cap K \cap \A_{\gamma}}=1$, for every $\gamma\in [k]$.  
\end{lemma}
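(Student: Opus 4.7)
The plan is to establish the equivalence in both directions using Definition~\ref{def:multinet} together with the extra stipulation for a net that every base-locus flat lies on exactly one hyperplane from each class. The easy direction is immediate: starting from a $k$-net, I take the underlying partition, and given $H\in\A_\alpha$ and $K\in\A_\beta$ with $\alpha\ne\beta$, axiom~\eqref{m2} puts $H\cap K$ in $\XX$, whereupon the net condition at once yields $\abs{H\cap K\cap\A_\gamma}=1$ for every $\gamma\in[k]$.

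For the reverse direction, starting from a partition satisfying the hypothesis, I would set $m_H:=1$ uniformly and define
\[
\XX:=\set{H\cap K\colon H\in\A_\alpha,\, K\in\A_\beta,\, \alpha\ne\beta}\subseteq L_2(\A).
\]
Axiom~\eqref{m2} and the net condition are then built into this construction, and axiom~\eqref{m3} reduces to the observation that $n_X=\abs{X\cap\A_i}=1$, independent of $i$.

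The two remaining axioms require a small argument. For~\eqref{m1}, I fix distinct $\alpha,\beta\in[k]$ and $H\in\A_\alpha$, and check that the map $K\mapsto H\cap K$ from $\A_\beta$ to $\set{X\in\XX\colon X\subseteq H}$ is a bijection: injectivity is immediate from $\abs{X\cap\A_\beta}=1$, and surjectivity follows by recognising any such $X$ as $H\cap K$ for the unique $K\in\A_\beta$ it contains. Hence $\abs{\A_\beta}$ depends only on $H$, not on $\beta$, and exploiting $k\ge 3$ to swap the roles of $\alpha$ and $\beta$ yields a common value $d$. For~\eqref{m4}, I observe that two distinct hyperplanes $H,H'\in\A_i$ satisfy $H\cap H'\notin\XX$: otherwise the hypothesis would force $\abs{H\cap H'\cap\A_i}=1$, contradicting that both $H$ and $H'$ contain this flat. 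Since a projective line minus finitely many points is connected and any two lines of $\A_i$ meet in such a point, $\bigl(\bigcup_{H\in\A_i}H\bigr)\setminus\XX$ is connected.

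I expect the main subtlety to sit in~\eqref{m4}: the combinatorial condition is phrased only for hyperplanes in different classes, so ruling out that same-class intersections belong to $\XX$ is the crux. Once that is in hand, the bijection for~\eqref{m1} is routine, and the remaining axioms follow directly from the construction.
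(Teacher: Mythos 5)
Your proof is correct. The paper states Lemma~\ref{lem:lsq} without proof (it is quoted from \cite{PS-betamilnor}), so there is nothing to compare against, but your argument is the natural one and is complete: you correctly identify the only non-immediate points, namely that the bijection $K\mapsto H\cap K$ forces equal block sizes for axiom~\eqref{m1}, and that same-class intersection points cannot lie in the base locus $\XX$, which gives the connectivity required by axiom~\eqref{m4}.
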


In particular, a $3$-net on $\A$ is a partition into 
$3$ non-empty subsets with the 
property that, for each pair of hyperplanes $H,K\in \A$ in 
different classes, we have $H\cap K=H\cap K\cap L$, for some 
hyperplane $L$ in the third class.
Nets of type $(3,d)$ are intimately related to Latin squares 
of size $d$, i.e., $d\times d$ matrices 
with each row and column a permutation of the set $[d]$.   
Indeed, if  $\A_1,\A_2,\A_3$ are the parts of such a $3$-net, then 
the multi-colored $2$-flats define a Latin square: 
if we label the hyperplanes of $\A_{\alpha}$ as 
$H^{\alpha}_{1},\dots, H^{\alpha}_{d}$, 
then the $(p,q)$-entry of this matrix is the integer $r$ 
given by the requirement that 
$H^1_p \cap H^2_q \cap H^3_{r}  \in L_2(\A)$.
A similar procedure shows that a $k$-net is 
encoded by a $(k-2)$-tuple of orthogonal Latin squares. 

\subsection{Pencils}
\label{subsec:pens}
Let $\A$ be a (central) arrangement in $\C^3$, with defining 
polynomial $Q(\A)=\prod_{H\in \A} f_H$. Suppose we 
have a $(k,d)$-multinet $\NN$ on $\A$, 
with parts $\A_\alpha$ and multiplicity vector $m$.  
Write $Q_{\alpha}=\prod_{H\in\A_{\alpha}}f_H^{m_H}$, 
and define a rational map 
$f\colon \C^3\to\CP^1$ by $f(x)=(Q_1(x) \colon Q_2(x))$. 
There is then a set 
$D=\set{(a_1\colon b_1), \dots , (a_k\colon b_k)}$ 
of $k$ distinct points in $\CP^1$
such that each of the degree $d$ polynomials $Q_1,\dots, Q_k$ 
can be written as $Q_{\alpha}=a_{\alpha} Q_2-b_{\alpha}Q_1$, and, 
furthermore, the image of $f\colon M(\A)\to\CP^1$ misses $D$.   
The {\em pencil}\/ associated to the multinet $\NN$, then, 
is the restriction of $f$ to the complement of the arrangement,  
\begin{equation}
\label{eq:pen}
f=f_{\NN}\colon M(\A) \to \CP^1\setminus D.
\end{equation}

The map $f$ can also be viewed as an `orbifold fibration,' or, in the 
terminology of Arapura \cite{Ar}, an `admissible map.' 
To compute the homomorphism induced in first homology by this 
map, let $\gamma_1,\dots ,\gamma_k$ be compatibly oriented  
simple closed curves on $S=\CP^1\setminus D$, going around 
the points of $D$, so that $H_1(S,\Z)$ is generated by the homology 
classes $c_{\alpha}=[\gamma_{\alpha}]$, subject to the single relation 
$\sum_{\alpha=1}^k c_{\alpha}=0$.  The following lemma was proved 
in \cite{PS-betamilnor} using an approach based on de Rham cohomology.  
We give here another proof.  

\begin{lemma}[\cite{PS-betamilnor}]
\label{lem:pen h1}
Let $f\colon M(\A) \to S$ be the pencil 
associated to a multinet $\NN$ on an arrangement $\A$. 
The induced homomorphism $f_{*} \colon H_1(M(\A),\Z) \to H_1(S,\Z)$ 
is then given by 
\begin{equation*}
\label{eq:multi hom}
f_*(x_H) = m_H c_{\alpha}, \quad\text{for  $H\in \A_{\alpha}$}.
\end{equation*}
\end{lemma}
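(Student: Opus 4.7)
The plan is to compute $f_{*}$ by post-composing $f$ with a family of ``test projections'' from $\CP^{1}\setminus D$ down to $\C^{*}$, for which the induced map on $H_{1}$ can be read off directly from divisor orders. First I would introduce, for each ordered pair $\alpha\neq\beta$ in $[k]$, the linear fractional map $\pi_{\alpha\beta}\colon \CP^{1}\setminus D \to \C^{*}$ defined by $(u\colon v)\mapsto (a_{\alpha}v-b_{\alpha}u)/(a_{\beta}v-b_{\beta}u)$. Using the identity $Q_{\gamma}=a_{\gamma}Q_{2}-b_{\gamma}Q_{1}$ that appears in the construction of the pencil, the composition is $\pi_{\alpha\beta}\circ f = Q_{\alpha}/Q_{\beta}$, viewed as a holomorphic function $M(\A)\to\C^{*}$. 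Since $H_{1}(\CP^{1}\setminus D,\Z)$ is the free abelian group on $c_{1},\dots,c_{k}$ modulo $\sum c_{\gamma}=0$, any class $\sum n_{\gamma}c_{\gamma}$ is determined by the collection of differences $\{n_{\alpha}-n_{\beta}\}_{\alpha\neq\beta}$, which are exactly the values read off by $(\pi_{\alpha\beta})_{*}$.

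Next I would carry out two local winding-number calculations. On the target side, a standard computation at each of the three types of punctures yields $(\pi_{\alpha\beta})_{*}(c_{\gamma}) = \delta_{\gamma,\alpha}-\delta_{\gamma,\beta}$: the map $\pi_{\alpha\beta}$ has a simple zero at $(a_{\alpha}\colon b_{\alpha})$, a simple pole at $(a_{\beta}\colon b_{\beta})$, and is holomorphic and nonvanishing at every other puncture. On the source side, I would compute $(Q_{\alpha}/Q_{\beta})_{*}(x_{H})$ for $H\in\A_{\gamma}$ by the order-of-vanishing rule: pick a smooth point of $H$ lying on no other hyperplane, take local coordinates in which $f_{H}=z_{1}$, and note that $f_{K}$ is a unit there for every $K\neq H$. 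Because $H\in\A_{\alpha}$ iff $\gamma=\alpha$ (and similarly for $\beta$), the function $Q_{\alpha}/Q_{\beta}$ factors locally as $z_{1}^{m_{H}(\delta_{\gamma,\alpha}-\delta_{\gamma,\beta})}$ times a unit, so the image of the meridian $x_{H}$ in $H_{1}(\C^{*},\Z)=\Z$ is $m_{H}(\delta_{\gamma,\alpha}-\delta_{\gamma,\beta})$.

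Combining the two calculations gives $(\pi_{\alpha\beta})_{*}f_{*}(x_{H}) = m_{H}(\delta_{\gamma,\alpha}-\delta_{\gamma,\beta}) = (\pi_{\alpha\beta})_{*}(m_{H}c_{\gamma})$ for every ordered pair $\alpha\neq\beta$, and by the separation property recorded in the first paragraph this forces $f_{*}(x_{H}) = m_{H}c_{\gamma}$. The only real technical point is keeping orientation conventions consistent between the meridians $x_{H}$, the loops around the punctures of $\CP^{1}\setminus D$, and the chosen generator of $H_{1}(\C^{*})$; once that is fixed, everything else is divisor bookkeeping that the pencil structure reduces to one-variable winding numbers.
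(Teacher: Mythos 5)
Your argument is correct, and it follows the same basic strategy as the paper's proof: post-compose $f$ with maps to $\C^*$ that detect the puncture classes $c_\gamma$, and compute the effect on the meridians $x_H$ by orders of vanishing. The implementations differ in two small but genuine ways. The paper uses, for each class $\alpha$, the single linear form $\theta_\alpha(z_1\colon z_2)=a_\alpha z_2-b_\alpha z_1$ together with the inclusion $M(\A)\hookrightarrow M(\A_\alpha)$, and quotes \cite[Prop.~4.6]{Su-toul} for the fact that $(Q_\alpha)_*(x_H)=m_H$. Your test maps $\pi_{\alpha\beta}=Q_\alpha/Q_\beta$ are actually the more careful choice: a homogeneous linear form is not literally a function on $\CP^1\setminus D$, and the paper's formula $(\theta_\alpha)_*(c_\beta)=\delta_{\alpha\beta}$ for all $\beta$ is not compatible with the relation $\sum_\beta c_\beta=0$, whereas your values $\delta_{\gamma\alpha}-\delta_{\gamma\beta}$ are. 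The price you pay is the extra separation step --- that the differences $n_\alpha-n_\beta$ determine a class in $\Z^k/\Z(1,\dots,1)\cong H_1(S,\Z)$ --- which you correctly record and which is immediate since the kernel of $(n_\gamma)\mapsto(n_\alpha-n_\beta)_{\alpha\ne\beta}$ is exactly the diagonal. Your second deviation is to re-derive the meridian computation $(Q_\alpha/Q_\beta)_*(x_H)=m_H(\delta_{\gamma\alpha}-\delta_{\gamma\beta})$ by a local order-of-vanishing argument at a generic point of $H$ rather than citing it; this is exactly the content of the cited proposition and keeps your proof self-contained. Modulo the orientation bookkeeping you flag at the end, there is nothing to fix.
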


\begin{proof}
Each polynomial $Q_{\alpha}$ defines a map 
$Q_{\alpha}\colon M(\A_{\alpha})\to \C^*$. 
If we let $\theta_{\alpha}\colon S \to \C^*$ be the 
map given by $\theta_{\alpha}(z_1\colon z_2) = a_{\alpha}z_2-b_{\alpha} z_1$, 
and let $\iota_{\alpha}\colon M(\A)\to M(\A_{\alpha})$ be the inclusion map, 
we obtain a commuting diagram,
\begin{equation}
\label{eq:cd-bis}
\xymatrix{M(\A) \ar^(.54){f}[r] \ar^{\iota_{\alpha}}[d] 
& S \ar^{\theta_{\alpha}}[d] \\
M(\A_{\alpha}) \ar^(.56){Q_{\alpha}}[r] & \C^*}
\end{equation}

Apply now the $H_1(-,\Z)$  functor to this diagram, 
and identify $H_1(\C^*,\Z)=\Z$.   Clearly,  if $H\in \A_{\beta}$, then 
$(\iota_{\alpha})_*$ takes $x_H$ to $\delta_{\alpha\beta} a_H$, 
whereas $(\theta_{\alpha})_*$ takes $c_{\beta}$ to $\delta_{\alpha\beta}$, 
where $\delta_{\alpha\beta}$ is the Kronecker delta. 
On the other hand, $(Q_{\alpha})_*$ is given by $x_H\mapsto m_H$, 
see \cite[Prop.~4.6]{Su-toul}.  This completes the proof.
\end{proof}

\section{Cohomology jump loci}
\label{subsec:cjl}

\subsection{Resonance varieties of a graded algebra}
\label{subsec:res var}

Let $A$ be a graded, graded-commutative algebra over a 
field $\k$.  We will assume that each 
graded piece $A^i$ is free and finitely generated over $\k$, 
and $A^0=\k$. We will also assume that $a^2=0$, for all $a\in A^1$, 
a condition which is automatically satisfied if $\ch(\k)\ne 2$, 
by graded-commutativity of multiplication in $A$.
For each element $a \in A^1$, we turn the algebra $A$ into a 
cochain complex, 
\begin{equation}
\label{eq:aomoto}
\xymatrix{(A , \delta_a)\colon  \ 
A^0\ar^(.66){ \delta_{a}}[r] & A^1\ar^{ \delta_{a}}[r] & A^2  \ar[r]& \cdots},
\end{equation}
with differentials the maps  $ \delta_{a}(b)=ab$. 
The (degree $i$, depth $s$) {\em resonance varieties}\/ of $A$ 
are then defined as the jump loci for the cohomology of this `Aomoto' complex, 
\begin{equation}
\label{eq:resa}
\RR^i_s(A)= \{a \in A^1 \mid \rank_{\k} H^i(A , \delta_{a}) \ge  s\}.
\end{equation}

These sets are Zariski-closed, homogeneous subsets of the affine 
space $A^1$.  Furthermore, these varieties respect field extensions:  
if $\k\subseteq \mathbb{K}$, then
$\RR^i_s(A)=\RR^i_s(A \otimes \mathbb{K}) \cap A^1$.
As shown in \cite{PS-plms, SW}, the resonance varieties 
obey the following `product formulas':
\begin{align}
\label{eq:resprod}
\RR^1_s(A \otimes B)&=\RR^1_s(A)\times \{0\} \cup \{0\}\times \RR^1_s(B),\\
\RR^i_1(A \otimes B)&=\bigcup\limits_{j+k=i} \RR^j_1(A)\times  \RR^k_1(B). \notag
\end{align}

For our purposes here, we will mainly consider the degree 
one resonance varieties, $\RR_s(A)=\RR^1_s(A)$. Clearly, these varieties 
depend only on the degree $2$ truncation of $A$.  More explicitly, 
$\RR_s(A)$ consists of $0$, together with all elements 
$a \in A^1$ for which there exist $b_1,\dots ,b_s \in A^1$ 
such that the span of $\{a,b_1,\dots,b_s\}$ has dimension $s+1$ and 
$ab_1=\cdots =ab_s=0$ in $A^2$.

The degree $1$ resonance varieties enjoy the following naturality 
property:  if $\varphi\colon A\to B$ is a morphism of commutative 
graded algebras, and  $\varphi$ is injective in degree $1$, then 
the $\k$-linear map $\varphi^1\colon A^1\to B^1$ embeds $\RR_s(A)$ 
into $\RR_s(B)$, for each $s\ge 1$.

Finally, suppose $X$ is a connected, finite-type CW-complex. 
We define then the resonance varieties of $X$ to be the sets 
$\RR^i_s(X,\k):=\RR^i_s(H^*(X,\k))$, viewed as homogeneous 
subsets of the affine space $H^1(X,\k)$.

\subsection{The resonance varieties of the Orlik--Solomon algebra}
\label{subsec:os alg}

The cohomology ring of a hyperplane arrangement 
complement was computed by E.~Brieskorn in the early 1970s, 
building on pioneering work of V.I.~Arnol'd on the cohomology 
ring of the pure braid group.   In \cite{OS}, Orlik and 
Solomon gave a simple description of this ring, solely 
in terms of the intersection lattice of the arrangement.  

Once again, let $\A$ be a central arrangement, with complement 
$M=M(\A)$.  Fix a linear order on $\A$, and let $E$ be the exterior 
algebra over a field $\k$ with generators $\set{e_H \mid H\in \A}$ 
in degree $1$.  Next, define a differential $\partial \colon E\to E$ 
of degree $-1$, starting from $\partial(1)=0$ 
and $\partial(e_H)=1$, and extending $\partial$ to 
a linear operator on $E$, using the graded Leibniz rule.  
Finally, let $I$ be the ideal of $E$ generated by all elements 
of the form $\partial \big(\prod_{H\in \B} e_H\big)$, 
where $\B\subset \A$ and  
$\codim \bigcap_{H\in \B} H < \abs{\B}$.  Then $H^*(M,\k)$ is 
isomorphic, as a graded $\k$-algebra, to the quotient ring $A=E/I$.  

Under this isomorphism, the basis $\{e_H\}$ of
$A^1$ is dual to the basis of $H_1(M,\k)=H_1(M,\Z)\otimes \k$
given by the meridians $\{x_H\}$ around the hyperplanes, 
oriented compatibly with the complex orientations on $\C^{d+1}$ 
and the hyperplanes. 
Since $A$ is a quotient of an exterior algebra, we have that 
$a^2=0$ for all $a\in A^1$. Thus, we may define the resonance varieties 
$\RR^i_s(\A,\k)$ of our arrangement $\A$ (over the field $\k$) as the 
corresponding resonance varieties 
of the Orlik--Solomon algebra $H^*(M(\A),\k)$. 

As usual, let $U=U(\A)$ be the projectivized complement.  
The diffeomorphism $M\cong U\times \C^*$, together with the 
K\"{u}nneth formula and the product formulas for resonance 
from \eqref{eq:resprod} yields identifications
\begin{align}
\label{eq:rqma}
\RR^1_s(\A,\k)&\cong \RR^1_s(U(\A),\k),\\
\RR^i_1(\A,\k) &\cong \RR^i_1(U(\A),\k)\cup 
\RR^{i-1}_1(U(\A),\k).  \notag
\end{align}  

If $\B\subset \A$ is a proper sub-arrangement, the inclusion 
$M(\A) \inj M(\B)$ induces a morphism 
$H^*(M(\B),\k) \to H^*(M(\B),\k)$ which is injective 
in degree $1$.  We thus obtain an embedding 
$\RR^1_s(\B,\k) \inj \RR^1_s(\A,\k)$.  The irreducible 
components of $\RR^1_s(\A,\k)$ that  lie in the image 
of such an embedding are called {\em non-essential}; 
the remaining components are called {\em essential}. 

The description of the Orlik--Solomon algebra given above 
makes it clear that the resonance 
varieties $\RR^i_s(\A,\k)$ depend only on the 
intersection lattice, $L(\A)$, and on the characteristic 
of the field $\k$.  

The complex resonance varieties $\RR^i_1(\A,\C)$ 
were first defined and studied by Falk in \cite{Fa97}.
Soon after, Cohen--Suciu \cite{CS99}, Libgober \cite{Li01}, and 
Libgober--Yuzvinsky \cite{LY00} showed that the varieties 
$\RR_s(\A)=\RR^1_s(\A,\C)$ consist of linear subspaces 
of the vector space $\C^{\A}$, intersecting transversely at $0$.  
Moreover, all such subspaces  
have dimension at least two, and the cup-product map 
vanishes identically on each one of them. 
Finally, $\RR_s(\A)$ is the union of all components 
of $\RR_1(\A)$ of dimension greater than $s$.

The resonance varieties $\RR^1_s(\A,\k)$ for $\k$ a field of positive 
characteristic were first defined and studied by Matei and Suciu in \cite{MS00}.  
The nature of these varieties is much less predictable; for instance, their 
irreducible components need not be linear, and, even when they are linear, 
they may intersect non-transversely.  We refer to 
\cite{S01, Fa07, DeS-plms, PS-betamilnor} for more on this subject. 

\subsection{Multinets and complex resonance}
\label{subsec:multi res}
Work of Falk and Yuzvinsky \cite{FY} greatly clarified 
the nature of the (degree $1$) resonance varieties of arrangements. 
Let us briefly  review their construction.

Recall that every $k$-multinet $\NN$ on an arrangement $\A$ 
with parts $\A_1,\dots, \A_k$ and multiplicities $m_H$ for each $H\in \A$ 
gives rise to an orbifold fibration (or, for short, a pencil)  
$f\colon M(\A) \to S$, where $S= \CP^1\setminus 
\{\text{$k$ points}\}$.   In view of Lemma \ref{lem:pen h1}, the induced 
map in cohomology, $f^{*} \colon H^*(S,\Z) \to H^*(M,\Z)$, is given 
in degree $1$ by $f^*(c_{\alpha}^*) = u_{\alpha}$, where
$u_{\alpha}=\sum_{H\in \A_{\alpha}} m_H e_H$.  Consequently,  
the homomorphism $f^{*} \colon H^1(S,\C) \to H^1(M,\C)$ is injective, and thus 
sends $\RR_1(S,\C)$ to $\RR_1(M,\C)$.

Let us identify $\RR^1(S,\C)$ with $H^1(S,\C)=\C^{k-1}$, and view 
$P_{\NN}=f^*(H^1(S,\C))$ as lying inside $\RR_1(\A)$.  It follows from 
the preceding discussion that  $P_{\NN}$ is the $(k-1)$-dimensional linear 
subspace spanned by the vectors $u_2-u_1,\dots , u_k-u_1$.  In fact, 
as shown in \cite[Thms.~2.4--2.5]{FY}, this subspace is 
an essential component of $\RR_1(\A)$. 

More generally, suppose there is a sub-arrangement 
$\B\subseteq \A$ supporting a multinet $\NN$.   In this case, 
the inclusion $M(\A) \inj M(\B)$ induces a 
monomorphism $H^1(M(\B),\C) \inj H^1(M(\A),\C)$, 
which restricts to an embedding $\RR_1(\B) \inj \RR_1(\A)$.  
The linear space $P_{\NN}$, then, 
lies inside $\RR_1(\B)$, and thus, inside $\RR_1(\A)$.
Conversely, as shown in \cite[Thm.~2.5]{FY} 
all (positive-dimensional) irreducible components 
of $\RR_1(\A)$ arise in this fashion. Consequently, 
\begin{equation}
\label{eq:rsa}
\RR_s(\A) = \bigcup_{\B \subseteq \A} 
\bigcup_{\stackrel{\text{\tiny{$\NN$ a multinet on $\B$}}}
{\text{\tiny{with at least $s+2$ parts}}}} P_{\NN}.
\end{equation}

\subsection{Characteristic varieties and finite abelian covers}
\label{subsec:cv}

We switch now to a different type of jump loci, involving this 
time homology with twisted coefficients. 
Let $X$ be a connected, finite-type CW-complex, let 
$\pi=\pi_1(X,x_0)$, and let $\Hom(\pi,\C^*)$ be the affine 
algebraic group of $\C$-valued, multiplicative characters on $\pi$, 
which we will identify with $H^1(X,\C^*)$. 
The (degree $i$, depth $s$) {\em characteristic varieties}\/ of 
$X$ are the jump loci for homology with coefficients in rank-$1$ 
local systems on $X$:
\begin{equation}
\label{eq:cvs}
\VV^i_s(X)=\{\xi\in\Hom(\pi,\C^*)  \mid  
\dim_{\C} H_i(X,\C_\xi)\ge s\}.
\end{equation}
By construction, these loci are Zariski-closed subsets of 
the character group. 

To a large degree, the characteristic varieties 
control the Betti numbers of regular, finite abelian covers $Y\to X$.  
For instance, suppose that the deck-trans\-formation group is 
cyclic of order $n$, and fix an inclusion $\iota \colon \Z_n \inj \C^*$, 
by sending $1 \mapsto e^{2\pi \ii/n}$. With this choice, the 
epimorphism $\nu \colon \pi\surj \Z_n$ defining the $n$-fold 
cyclic cover $Y \to X$ yields a torsion character, $\rho=\iota\circ \nu\colon \pi \to \C^*$. 
A standard argument using Maschke's theorem yields an isomorphism 
of $\C[\Z_n]$-modules,
\begin{equation}
\label{eq:equiv}
H_i(Y,\C) \cong H_i(X,\C) \oplus 
\bigoplus_{1<r\mid n} (\C[t]/\Phi_r(t))^{\depth(\rho^{n/r})}, 
\end{equation}
where $\Phi_r(t)$ is the $r$-th 
cyclotomic polynomial, and the depth of a character $\xi\colon \pi\to \C^*$, 
defined as the dimension of $H_i(X, \C_{\xi})$, is given by 
$\depth(\xi)=\max\{s \mid \xi\in \VV^i_s(X)\}$. 

The exponents in formula \eqref{eq:equiv} 
arising from prime-power divisors can be estimated in terms of the 
corresponding Aomoto--Betti numbers. More precisely, suppose 
$n$ is divisible by $r=p^s$, for some prime $p$.  Composing the 
canonical projection $\Z_n \surj \Z_p$ with $\nu$ defines a 
cohomology class $\bar\nu\in H^1(X,\F_p)$.  
Assuming that $H_*(X,\Z)$ is 
torsion-free, it was shown in \cite[Thm.~11.3]{PS-tams} that 
\begin{equation}
\label{eq:modular ineq}
\dim_{\C} H_i(X, \C_{\rho^{n/r}}) \le 
\dim_{\F_p} H^i( H^{\hdot}(X, \F_p), \delta_{\bar\nu}).
\end{equation}

\subsection{Characteristic varieties of arrangements}
\label{subsec:char var}
Let us consider again a  hyperplane arrangement $\A$, 
with complement $M=M(\A)$. 
The varieties $\VV_s(\A):=\VV^1_s(M(\A))$ are closed algebraic 
subsets of the character torus $\Hom(\pi_1(M),\C^*)=(\C^*)^n$, 
where $n=\abs{\A}$.  
Since $M$ is diffeomorphic to $U\times \C^*$, where $U=U(\A)$, 
the character torus $H^1(M,\C^*)$ splits as 
$H^1(U,\C^*)\times \C^*$.  Under this splitting, 
the characteristic varieties $\VV_s(\A)$ get 
identified with the varieties $\VV^1_s(U)$ lying 
in the first factor. 

Since $M$ is a smooth, quasi-projective variety, 
a general result of Arapura \cite{Ar}, as refined by 
Artal Bartolo--Cogolludo--Matei \cite{ACM} 
and Budur--Wang \cite{BW}, 
insures that $\VV_s(\A)$ is a finite union of translated 
subtori.  Moreover, as shown by Cohen--Suciu \cite{CS99} 
and Libgober--Yuzvinsky \cite{LY}, and, 
in a broader context, by Dimca--Papadima--Suciu \cite{DPS-duke} 
and Dimca--Papadima \cite{DP-ccm}, 
the tangent cone at the origin to $\VV_s(\A)$ coincides with the 
resonance variety $\RR_s(\A)$, for all $s\ge 1$. 

More explicitly, let 
$\exp\colon H^1(M,\C)\to H^1(M,\C^*)$ be the 
coefficient homomorphism induced by the exponential map 
$\C\to \C^*$. 
Then, if $P\subset H^1(M,\C)$ is one of the linear 
subspaces comprising $\RR_s(\A)$, its image under 
the exponential map, $\exp(P)\subset H^1(M,\C^*)$, 
is one of the subtori comprising $\VV_s(\A)$. Furthermore, 
the correspondence $P\leadsto \exp(P)$ yields 
a bijection between the components of $\RR_s(\A)$ 
and the components of $\VV_s(\A)$ passing through 
the origin $\bo$.  

Now recall that each positive-dimensional component of $\RR_1(\A)$ 
is obtained by pullback along a pencil $f\colon M\to S$, where $S=\CP^{1}\setminus 
\{\text{$k$ points}\}$ and $k\ge 3$. Thus, each positive-dimensional 
component of $\VV_1(\A)$ containing the origin is of the form 
$\exp(P)=f^*(H^1(S,\C^*))$, for some pencil $f$. 
An easy computation shows that 
$\VV^1_s(S)=H^1(S,\C^*)=(\C^*)^{k-1}$ for all 
$s \le k-2$.  Hence, the subtorus $f^*(H^1(S,\C^*))$ is a 
positive-dimensional component of $\VV_1(\A)$ that contains the origin 
and lies inside $\VV_{k-2}(\A)$. 

As shown in \cite{S02}, the (depth $1$) characteristic variety of an arrangement 
may have irreducible components not passing through the origin.  
A general combinatorial machine for producing such translated 
subtori has been recently given in \cite{DeS-plms}. 
Namely, suppose $\A$ admits a {\em pointed multinet}, 
i.e., a multinet $\NN$ and a hyperplane $H\in \A$ 
for which $m_H>1$, and $m_H \mid n_X$ 
for each flat $X$ in the base locus such that $X\subset H$.  
Letting $\A'=\A\setminus \{H\}$ 
be the deletion of $\A$ with respect to $H$, it turns out 
that $\VV_1(\A')$ has a component which is a $1$-dimensional 
subtorus of $H^1(M(\A'),\C^*)$, translated by a character of 
order $m_H$.  

For instance, if $\A$ is the reflection arrangement of type ${\rm B}_3$ 
and $\NN$ is the $(4,3)$-multinet depicted in the middle of Figure \ref{fig:multinets}, 
then choosing $H$ to be one of the hyperplanes with multiplicity $m_H=2$ 
leads to a translated torus in the first characteristic variety of the 
deleted ${\rm B}_3$ arrangement, $\A'=\A\setminus \{H\}$.  
Whether all positive-dimensional translated 
subtori in the (degree $1$, depth $1$) characteristic varieties 
of arrangements occur in this fashion is an open problem.

\section{The algebraic monodromy of the Milnor fiber}
\label{sect:homology}

\subsection{The homology of the Milnor fiber}
\label{subsec:milnor}

Using the interpretation of the Milnor fiber of a hyperplane 
arrangement as a finite cyclic cover of the projectivized complement, 
we may compute the homology groups of the Milnor fiber and the 
characteristic polynomial of the algebraic monodromy in terms 
of the characteristic varieties of the arrangement. 

To see how that works, let $\A$ be an arrangement of $n$ hyperplanes 
in $\C^{d+1}$.  Without loss of generality, we may assume  $d=2$.  
Let $M$ be the complement of the arrangement, and let $U$ be its projectivization.  
Recall that the Milnor fiber $F=F(\A)$ may be viewed as the regular, $\Z_n$-cover 
of $U$, classified by the homomorphism $\pi_1(U)\surj \Z_n$ taking each 
meridian loop $x_H$ to $1$.  

For each divisor $r$ of $n$, let $\rho_r\colon \pi_1(U)\to \C^*$ 
be the character defined by $\rho_r(x_H)= e^{2\pi \ii /r}$. 
It follows from formula \eqref{eq:equiv} that 
\begin{equation}
\label{eq:h1milnor}
H_1(F(\A),\C) = H_1(U,\C) \oplus \bigoplus_{1<r\mid n} 
(\C[t]/\Phi_r(t))^{e_r(\A)},
\end{equation}
as modules over $\C [\Z_n]$, where the integers $e_r(\A):=\depth (\rho_r)$ 
depend on the position of the diagonal characters $\rho_r\in (\C^*)^{n-1}$ 
with respect to the characteristic varieties $\VV_s(U)$.  
Note that only the essential components of these varieties 
may contribute to the sum.  Indeed, if a component 
lies on a proper coordinate subtorus $C$, 
then the diagonal subtorus, $D=\{(t,\dots ,t) \mid t\in \C^*\}$, 
intersects $C$ only at the origin. In particular, components 
arising from multinets supported 
on proper sub-arrangements of $\A$, do not produce 
jumps in the first Betti number of $F(\A)$.  

Let $h_*\colon H_1(F,\C)\to H_1(F,\C)$ be the 
degree $1$ algebraic monodromy of the Milnor fibration, and 
let  $\Delta_{\A}(t)=\det (t \cdot \id - h_*)$ be its characteristic polynomial.  
Formula \eqref{eq:h1milnor} may be interpreted as saying that 
\begin{equation}
\label{eq:delat}
\Delta_{\A}(t) = (t-1)^{n-1}  \cdot \prod_{1<r\mid n} \Phi_r(t)^{e_r(\A)}.
\end{equation}

Consequently, if $\varphi(r)$ denotes the Euler totient function, then 
\begin{equation}
\label{eq:betti1 mf}
b_1(F(\A))= n-1 + \sum_{1< r | n} \varphi(r)e_r(\A). 
\end{equation}

In the above expressions, not all the divisors $r$ of $n$ appear. 
For instance, as shown by Libgober \cite[Prop.~2.1]{Li02} and 
M\u{a}cinic--Papadima \cite[Thm.~3.13]{MP}, the 
following holds: if there is no flat $X\in L_2(\A)$ of 
multiplicity $q\ge 3$ such that $r\mid q$, then $e_r(\A)$ vanishes. 
In particular, if the lines of $\bar\A$ intersect  
only in points of multiplicity $2$ and $3$, then only $e_3(\A)$ 
may be non-zero, whereas if points of multiplicity $4$ 
occur, then $e_2(\A)$ and $e_4(\A)$ may also be non-zero.  
For more combinatorial conditions that lead to the vanishing 
of the exponents $e_r(\A)$ we refer to \cite{CS95, CDO03, Bt14, BY16}. 

In \cite[Thm.~13]{CDO03}, Cohen, Dimca, and Orlik 
give combinatorial upper bounds on the exponents 
of the cyclotomic polynomials appearing in \eqref{eq:h1milnor}.  
The next result provides lower bounds for those exponents, 
in the presence of reduced multinets on the arrangement.

\begin{theorem}[\cite{PS-betamilnor}]
\label{thm:be3}
Suppose that an arrangement $\A$ admits a reduced $k$-multinet, and 
let $f\colon M(\A)\to S$ denote the associated pencil.  Then: 
\begin{enumerate}
\item \label{bb1}
The character $\rho_k$ belongs to $f^*(H^1(S,\C^*))$, and  
$e_k(\A)\ge k-2$. 
\item \label{bb2}
If $k=p^s$, then $\rho_{p^r}\in f^*(H^1(S,\C^*))$
and $e_{p^r}(\A)\ge k-2$, for all $1\le r\le s$.
\end{enumerate}
\end{theorem}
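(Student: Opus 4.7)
The plan is to locate each diagonal character $\rho_r$ inside the image of the pullback map $f^*\colon H^1(S,\C^*) \to H^1(M(\A),\C^*)$ associated to the multinet $\NN$, and then invoke the inclusion $f^*(H^1(S,\C^*)) \subseteq \VV_{k-2}(\A)$ already recorded in Section~\ref{subsec:char var}. Since any character of the form $\rho_r = f^*\chi$ automatically lies in $\VV_{k-2}(\A)$, the entire argument reduces to producing, for the relevant $r$, an explicit character $\chi$ on the base $S = \CP^1 \setminus \{k \text{ points}\}$ with $f^*\chi = \rho_r$.

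For part \eqref{bb1}, I would define $\chi_k \in \Hom(H_1(S,\Z),\C^*)$ by $\chi_k(c_\alpha) = \zeta_k$ for every $\alpha \in [k]$, where $\zeta_k := e^{2\pi\ii/k}$. This is consistent with the unique relation $\sum_\alpha c_\alpha = 0$ precisely because $\zeta_k^k = 1$. Since the multinet $\NN$ is reduced, $m_H = 1$ for every hyperplane, and so Lemma~\ref{lem:pen h1} gives $f_*(x_H) = c_\alpha$ whenever $H \in \A_\alpha$; hence
\[
f^*(\chi_k)(x_H) = \chi_k(f_*(x_H)) = \chi_k(c_\alpha) = \zeta_k = \rho_k(x_H)
\]
for every $H \in \A$, which shows $\rho_k = f^*(\chi_k)$. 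The containment recalled above then yields $\rho_k \in \VV_{k-2}(\A)$, i.e.\ $e_k(\A) \ge k-2$.

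For part \eqref{bb2}, with $k = p^s$ and $1 \le r \le s$, I would repeat the construction with $\chi_{p^r}(c_\alpha) := e^{2\pi\ii/p^r}$ for every $\alpha$. The compatibility condition $(e^{2\pi\ii/p^r})^k = 1$ boils down to $p^r \mid p^s$, which holds because $r \le s$; this is the one place where the prime-power hypothesis enters in an essential way, since for a generic weight $k$ there is no reason for $p^r$ to divide $k$. The same computation as in part \eqref{bb1} gives $f^*(\chi_{p^r}) = \rho_{p^r}$, and the containment $f^*(H^1(S,\C^*)) \subseteq \VV_{k-2}(\A)$ yields $e_{p^r}(\A) \ge k-2$.

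The main obstacle, which I am borrowing wholesale from Section~\ref{subsec:char var}, is the inclusion $f^*(H^1(S,\C^*)) \subseteq \VV_{k-2}(\A)$: it rests on the Arapura--ACM--BW structure theorems for characteristic varieties of smooth quasi-projective varieties, together with the elementary computation that every non-trivial character on $S$ (a bouquet of $k-1$ circles) satisfies $\dim H_1(S,\C_\chi) = k-2$. Once this input is taken for granted, the only arrangement-specific ingredient is the formula of Lemma~\ref{lem:pen h1}, for which the reduced multinet hypothesis is crucial; the prime-power hypothesis in part \eqref{bb2} enters only through the elementary divisibility $p^r \mid k$ that makes the defining character descend to $H_1(S,\Z)$.
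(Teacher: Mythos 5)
Your proof is correct and follows the same route as the cited source \cite{PS-betamilnor} (the survey itself states Theorem~\ref{thm:be3} without proof): Lemma~\ref{lem:pen h1} with all $m_H=1$ identifies $\rho_k$ (resp.\ $\rho_{p^r}$ when $k=p^s$) as $f^*$ of the character sending each $c_\alpha$ to the appropriate root of unity --- well defined on $H_1(S,\Z)$ exactly because $\zeta^k=1$ --- and the containment $f^*(H^1(S,\C^*))\subseteq \VV_{k-2}(\A)$ recorded in \S\ref{subsec:char var} does the rest. One minor point of attribution: that containment is the elementary direction, needing only that the pencil has connected generic fibers (condition~(4) of Definition~\ref{def:multinet}), whence $H^1(S,\C_\chi)\inj H^1(M,\C_{f^*\chi})$ is injective and $\dim H^1(S,\C_\chi)=k-2$ for $\chi\ne\bo$; the Arapura--Artal~Bartolo--Cogolludo--Matei--Budur--Wang structure theorems are required only for the converse classification of the components of $\VV_1(\A)$, which your argument does not use.
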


\subsection{Aomoto--Betti numbers}
\label{subsec:aomoto betti}

Consider now a field $\k$ of characteristic $p$, and let $A=H^{*}(M,\k)$ 
be the Orlik--Solomon algebra over $\k$.   Recall that the $\k$-vector 
space $A^1=\k^{\A}$ comes endowed with a preferred basis, 
$\{e_H\}_{H\in \A}$; let $\sigma=\sum_{H\in  \A} e_H$ 
be the ``diagonal" element. Following \cite{PS-betamilnor}, we define 
the {\em Aomoto--Betti number}\/ of $\A$ (over $\k$) as 
\begin{equation}
\label{eq:betap}
\beta_\k(\A) = \max\{s \mid \sigma\in \RR_s(\A,\k)\}.
\end{equation} 

Clearly, this integer depends only on the prime $p=\ch (\k)$, and so we will 
write it simply as $\beta_{p}(\A)$.  The following result provides useful 
information about these combinatorial invariants of arrangements. 

\begin{prop}[\cite{PS-betamilnor}]
\label{prop:betap}
Let $\A$ be an arrangement, and $p$ a prime. 
\begin{enumerate}
\item If $p\nmid \abs{X}$, for any $X\in L_2(\A)$ with $\abs{X}>2$, 
then $\beta_p(\A)=0$. 
\item
If $\A$ supports a $k$-net, then $\beta_p (\A)=0$ if $p\not\mid k$,
and $\beta_p (\A) \ge k-2$, otherwise.
\end{enumerate}
\end{prop}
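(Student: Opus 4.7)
The plan is to compute $\beta_p(\A)$ directly from its definition by analyzing the kernel of the Aomoto differential $\delta_\sigma \colon A^1 \to A^2$, where $A = H^*(M(\A), \k)$ with $\ch \k = p$. Since $\sigma^2 = 0$, we always have $\k \sigma \subseteq \ker \delta_\sigma$, so $\beta_p(\A) = \dim_\k \ker \delta_\sigma - 1$. To localize the analysis, I would exploit the Orlik--Solomon decomposition $A^2 = \bigoplus_{X \in L_2(\A)} V_X$, where each summand has dimension $\abs{\A_X} - 1$ with basis $\{e_{H_0} e_H : H \in \A_X \setminus \{H_0\}\}$ for a chosen $H_0 \in \A_X$. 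Writing $b = \sum_H c_H e_H$, the $V_X$-component of $\sigma b$ is $\sigma_X b_X$ with $\sigma_X = \sum_{H \in \A_X} e_H$ and $b_X = \sum_{H \in \A_X} c_H e_H$. Applying the Orlik--Solomon relation $e_H e_K = e_{H_0} e_K - e_{H_0} e_H$ (for distinct $H, K, H_0 \in \A_X$) to expand each product should yield
\[
\sigma_X b_X = \sum_{H \in \A_X \setminus \{H_0\}} (q\, c_H - B_X)\, e_{H_0} e_H,
\qquad q := \abs{\A_X},\ B_X := \sum_{H \in \A_X} c_H.
\]
Thus $\sigma_X b_X = 0$ if and only if either $p \nmid q$ and $c_H$ is constant on $\A_X$, or $p \mid q$ and $B_X = 0$.

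For part (i), the hypothesis forces $p \nmid q$ for every 2-flat with $q > 2$, making $c_H$ constant on each such $\A_X$. For a multiplicity-$2$ flat $\{H, K\}$, the local condition reduces to $c_H = c_K$ in every characteristic (directly when $p \neq 2$; for $p = 2$ the equation $c_H + c_K = 0$ in $\F_2$ is equivalent to $c_H = c_K$). Since every pair of distinct hyperplanes shares a 2-flat, $c_H$ is constant on $\A$, so $b \in \k \sigma$ and $\beta_p(\A) = 0$.

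For part (ii), write $\A = \coprod_\alpha \A_\alpha$ and set $u_\alpha = \sum_{H \in \A_\alpha} e_H$, so that $\sigma = u_1 + \cdots + u_k$; the net axioms guarantee $\abs{\A_X} = k$ for every $X \in \XX$. If $p \nmid k$, the local analysis at each $X \in \XX$ forces $c_H = c_K$ for any $H, K$ lying in different classes, and since $k \ge 3$ any same-class pair is connected through a third class, making $c$ globally constant; hence $\beta_p(\A) = 0$. If $p \mid k$, I would invoke the pencil $f\colon M(\A) \to S = \CP^1 \setminus \{k \text{ points}\}$ from \S\ref{subsec:pens}. By Lemma \ref{lem:pen h1}, the induced map $f^*\colon H^1(S, \k) \to A^1$ sends $c_\alpha^* \mapsto u_\alpha$ and is injective with image $P_\NN^\k = \spn_\k\{u_\alpha - u_1 : 2 \le \alpha \le k\}$. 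The identity
\[
\sum_{\alpha=2}^{k}(u_\alpha - u_1) = \sigma - k u_1 = \sigma \quad \text{in characteristic } p
\]
places $\sigma$ inside $P_\NN^\k$. Since $S$ is homotopy equivalent to a wedge of $k-1$ circles, $H^2(S, \k) = 0$ and so $\RR_{k-2}(S, \k) = H^1(S, \k)$; by the naturality of resonance under the injection $f^*$, one concludes that $\sigma \in P_\NN^\k \subseteq \RR_{k-2}(\A, \k)$, whence $\beta_p(\A) \ge k-2$. The main technical obstacle is the local computation producing the explicit formula for $\sigma_X b_X$: the Orlik--Solomon relations must be manipulated with care so that the characteristic-$p$ dichotomy emerges from a single expression. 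Once this dichotomy is established, both parts reduce to combinatorial connectivity through the incidence of 2-flats, together with, for the lower bound in part (ii), the standard naturality of resonance under the admissible map induced by the net.
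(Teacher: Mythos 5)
The paper states this proposition without proof, citing \cite{PS-betamilnor}, so there is no internal argument to compare against; your proposal is essentially the proof from that reference, and it is correct. The key local computation checks out --- the coefficient of $e_{H_0}e_H$ in $\sigma_X b_X$ is indeed $q\,c_H - B_X$, giving the dichotomy ($c$ constant on $\A_X$ when $p \nmid q$, versus $\sum_{H\in\A_X} c_H = 0$ when $p \mid q$, with the double-point case $c_H = c_K$ holding in every characteristic) --- and both parts then follow as you describe, the lower bound in part (ii) reducing correctly to $\sigma \in f^*(H^1(S,\k))$ via $k \equiv 0 \bmod p$ together with the naturality of degree-$1$ resonance under maps injective in degree $1$, as stated in \S\ref{subsec:res var}.
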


To a large extent, the $\beta_p$ invariants control the (degree $1$) 
algebraic monodromy of the Milnor fibration.  More precisely, 
the ``modular upper bound" \eqref{eq:modular ineq}  yields the 
following inequalities on the prime-power exponents, 
\begin{equation}
\label{eq:bound bis}
e_{p^s} (\A) \le \beta_p(\A),  
\end{equation}
for all primes $p$ and integers $s\ge 1$.  In particular, if $\beta_p(\A)=0$, 
then $e_{p^s} (\A) =0$, for all $s\ge 1$.

\subsection{Nets, multiplicities, and the Milnor fibration}
\label{subsec:3nets}

Under suitable restrictions on the multiplicities of rank $2$ flats, the 
above modular bounds are sharp, at least for the prime 
$p=3$ and for $s=1$.

\begin{theorem}[\cite{PS-betamilnor}]
\label{thm:beta3}
Let $\A$ be a hyperplane arrangement, and  
suppose $L_2(\A)$ has no flats of multiplicity $3r$, for any $r>1$. 
Then $\beta_3(\A) \ne 0$ if and only if $\A$ admits a reduced 
$3$-multinet, or, equivalently, a $3$-net. Moreover, 
$\beta_3(\A)\le 2$ and $e_3(\A)=\beta_3(\A)$. 
\end{theorem}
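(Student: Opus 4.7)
The multiplicity hypothesis, together with the observation preceding Lemma~\ref{lem:lsq}, makes every reduced $3$-multinet on $\A$ automatically a $3$-net, so the two formulations coincide. The ``if'' direction of the equivalence is Proposition~\ref{prop:betap}(2) with $k=p=3$, which gives $\beta_3(\A)\ge 1$ whenever $\A$ supports a $3$-net. The bulk of the work consists of the converse, the upper bound $\beta_3\le 2$, and the equality $e_3=\beta_3$.

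For the converse, I examine the Aomoto complex $(A^{\bullet},\delta_\sigma)$ over $\F_3$, with $A=H^*(M(\A),\F_3)$. A nonzero class in $H^1(A,\delta_\sigma)$ is represented by an element $b=\sum_{H\in\A}\lambda_H e_H\in A^1$, not a scalar multiple of $\sigma$, satisfying $\sigma b=0$. Expanding $\sigma b=\sum_{H<K}(\lambda_K-\lambda_H)e_H e_K$ and imposing the Orlik--Solomon triple relations at each rank-$2$ flat of multiplicity $\ge 3$ produces a linear system on the $\lambda_H$ over $\F_3$; the exclusion of multiplicities of the form $3r$ with $r>1$ rules out ``mod-$3$ cancellations'' and forces the residues $\lambda_H\pmod 3$ to take exactly three values on three nonempty classes, with every multiplicity-$3$ flat meeting each class exactly once --- precisely the axiom of Lemma~\ref{lem:lsq}. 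This produces a $3$-net on $\A$.

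The same analysis gives $\beta_3(\A)\le 2$: every $b\in\ker\delta_\sigma$ is constrained to lie in the pencil-pullback $P_{\NN}=\spn(u_1-u_2,u_1-u_3)$ of an associated net (or a small sum thereof), where $u_\alpha=\sum_{H\in\A_\alpha}e_H$. A direct check in $\F_3$ shows $\sigma=u_1+u_2+u_3\in P_{\NN}$ (using $-2\equiv 1\pmod 3$), and bounding $\dim\ker\delta_\sigma$ in terms of the combinatorics of the net(s) --- again using the hypothesis to control how two distinct nets could coexist --- yields $\beta_3(\A)=\dim\bigl(\ker\delta_\sigma/\langle\sigma\rangle\bigr)\le 2$. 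For the equality $e_3(\A)=\beta_3(\A)$, the modular bound~\eqref{eq:bound bis} gives $e_3\le\beta_3$, with equality trivially when $\beta_3=0$. When $\beta_3\ne 0$, the pencil $f_{\NN}\colon M(\A)\to S=\CP^1\setminus\{3\text{ points}\}$ from the $3$-net factors $\rho_3$ through a character of order $3$ on $S$, and combining Theorem~\ref{thm:be3}(2) with the tangent-cone identification at $\bo$ from \cite{CS99, LY, DPS-duke}, refined to track the depth at the torsion character $\rho_3$, yields $e_3(\A)\ge\beta_3(\A)$.

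The main obstacle is the combinatorial analysis in the second paragraph: extracting a $3$-net from a single nonzero Aomoto class requires careful bookkeeping of the OS relations in $\F_3$, and it is exactly here that the multiplicity hypothesis --- excluding flats of multiplicity $6, 9, 12, \dots$ --- plays its essential role by preventing stray solutions that would not respect any net partition.
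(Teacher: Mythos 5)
The survey itself gives no proof of this theorem --- it is quoted from \cite{PS-betamilnor} --- so your plan has to be measured against the argument there. Your architecture matches it in outline, and the central combinatorial step is sketched correctly: the condition $\sigma b=0$ in $A^2$ localizes at rank-$2$ flats, so at any flat where $\lambda=(\lambda_H)$ is non-constant the multiplicity must be divisible by $3$, hence equal to $3$ by hypothesis, and the three residues must be $0,1,2$; a two-valued $\lambda$ is then impossible (two hyperplanes with different values would meet in a $\lambda$-constant flat), so the level sets of $\lambda$ give three non-empty classes satisfying the criterion of Lemma~\ref{lem:lsq}. Together with Proposition~\ref{prop:betap}(2) and the reduction of reduced $3$-multinets to $3$-nets, this handles the equivalence.

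The genuine gaps are in the last two claims. First, $\beta_3(\A)\le 2$ is asserted rather than proved: ``bounding $\dim\ker\delta_\sigma$ in terms of the combinatorics of the net(s)'' \emph{is} the content of that inequality, and it requires a real argument about how several linearly independent $\F_3$-cocycles modulo $\sigma$ --- equivalently, several distinct $3$-nets on the same $\A$ --- can coexist; your plan supplies none. Second, and more seriously, your route to $e_3(\A)\ge\beta_3(\A)$ fails exactly in the case $\beta_3(\A)=2$. Theorem~\ref{thm:be3} applied to a single $3$-net pencil $f\colon M\to S$ with $S=\CP^1\setminus\{3\text{ points}\}$ gives only $e_3\ge k-2=1$, since $\VV^1_2(S)=\{\bo\}$; and the tangent-cone theorem identifies $\RR_s(\A)$ with the tangent cone of $\VV_s(\A)$ \emph{at the origin}, so no refinement of it can control the depth of the torsion character $\rho_3\ne\bo$. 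To get $e_3\ge 2$ one must exploit the fact that $\beta_3=2$ forces several distinct essential components of $\VV_1(\A)$ (arising from the distinct $3$-nets encoded by the $2$-dimensional space of cocycles mod $\sigma$) all containing $\rho_3$, together with an additivity statement for the contributions of distinct orbifold pencils to $\dim H_1(M,\C_{\rho_3})$. That is where the substantive work in \cite{PS-betamilnor} lies, and it is missing from your plan.
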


Putting things together, we have the following immediate corollary.  

\begin{corollary}[\cite{PS-betamilnor}]
\label{cor:triple}
Suppose $L_2(\A)$ has only flats of multiplicity $2$ and $3$.  Then 
\[
\Delta_{\A}(t)=(t-1)^{\abs{\A}-1}\cdot (t^2+t+1)^{\beta_3(\A)},
\]
where $\beta_3(\A)\in \{0, 1, 2\}$ is combinatorially determined. 
\end{corollary}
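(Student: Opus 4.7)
The plan is to simply combine the machinery assembled in the preceding subsections, specializing the general factorization \eqref{eq:delat} under the very restrictive hypothesis on intersection multiplicities.

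First I would invoke the vanishing criterion of Libgober and M\u{a}cinic--Papadima recalled just after \eqref{eq:betti1 mf}: the exponent $e_r(\A)$ can be nonzero only when some flat $X \in L_2(\A)$ has multiplicity $q \ge 3$ divisible by $r$. Under the present hypothesis, the only admissible multiplicity $\ge 3$ is $q=3$, so $e_r(\A) = 0$ for every $r > 1$ with $r \ne 3$. Substituting this into \eqref{eq:delat} collapses the product to a single factor:
\begin{equation*}
\Delta_{\A}(t) \;=\; (t-1)^{n-1}\cdot \Phi_3(t)^{e_3(\A)} \;=\; (t-1)^{\abs{\A}-1}\cdot (t^2+t+1)^{e_3(\A)}.
\end{equation*}

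Next I would apply Theorem \ref{thm:beta3}, whose hypothesis is automatic here: since every rank-$2$ flat has multiplicity $2$ or $3$, in particular there is no flat of multiplicity $3r$ with $r > 1$. The theorem therefore gives $e_3(\A) = \beta_3(\A)$ and $\beta_3(\A) \in \{0,1,2\}$, which finishes the identification of the characteristic polynomial.

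Finally, the combinatorial determination of $\beta_3(\A)$ is essentially built into its definition \eqref{eq:betap}: the Aomoto--Betti number is extracted from the resonance variety $\RR_1(\A,\F_3)$ of the Orlik--Solomon algebra with $\F_3$-coefficients, and this algebra depends only on the intersection lattice $L(\A)$. I do not anticipate any genuine obstacle in this argument; every ingredient has been set up in advance, and the proof amounts to checking that the hypotheses of the two cited results are fulfilled so that they can be chained together. If anything deserves care, it is keeping straight that the ``no multiplicity $3r$'' condition of Theorem \ref{thm:beta3} is trivially satisfied under the stronger assumption that multiplicities are only $2$ and $3$, so that the modular bound $e_3(\A) \le \beta_3(\A)$ from \eqref{eq:bound bis} is actually attained and not merely an upper estimate.
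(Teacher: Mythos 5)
Your proposal is correct and follows exactly the route the paper intends: the vanishing of $e_r(\A)$ for $r\ne 3$ via the Libgober and M\u{a}cinic--Papadima criterion, combined with Theorem \ref{thm:beta3} (whose ``no flat of multiplicity $3r$'' hypothesis is indeed automatic here) to get $e_3(\A)=\beta_3(\A)\in\{0,1,2\}$, with combinatorial determinacy coming from the definition of $\beta_3$ via the Orlik--Solomon algebra. The paper presents this as an immediate consequence of ``putting things together,'' and your write-up is precisely that argument made explicit.
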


For more information on the class of `triple point' line arrangements, 
we refer to \cite{Li12, Di15, Di16, DIM}.
When multiplicity $4$ does occur, some further 
combinatorial restrictions lead to equalities in the modular 
bounds \eqref{eq:bound bis}, at the prime $p=2$ and for $s\le 2$.

\begin{theorem}[\cite{PS-betamilnor}]
\label{thm:beta4}
If $\A$ admits a $4$-net, and if 
$\beta_2(\A)\le 2$, then $e_2(\A)=e_4(\A)=\beta_2(\A)$. 
\end{theorem}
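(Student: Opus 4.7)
The plan is to derive the theorem by combining the lower bounds provided by Theorem \ref{thm:be3} and Proposition \ref{prop:betap} with the modular upper bound \eqref{eq:bound bis}, pinching all three quantities between the same pair of values.

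First I would apply Proposition \ref{prop:betap}\,(2) with $k=4$ and $p=2$: since $\A$ supports a $4$-net and $2 \mid 4$, this immediately gives $\beta_2(\A)\ge 4-2 = 2$. Combining this with the standing hypothesis $\beta_2(\A)\le 2$, I obtain $\beta_2(\A)=2$, so the target value on the right-hand side is pinned down.

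Next I would invoke Theorem \ref{thm:be3} for the lower bounds on the exponents. Every $4$-net is in particular a reduced $4$-multinet, and since $4=2^2$ is a prime power, part \eqref{bb2} of that theorem applies with $p=2$ and $s=2$. It yields $e_{2^r}(\A)\ge k-2 = 2$ for both $r=1$ and $r=2$, that is, $e_2(\A)\ge 2$ and $e_4(\A)\ge 2$.

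Finally, I would compare with the modular upper bound \eqref{eq:bound bis}, which gives $e_{2^s}(\A)\le \beta_2(\A)=2$ for every $s\ge 1$. Specializing to $s=1$ and $s=2$ yields $e_2(\A)\le 2$ and $e_4(\A)\le 2$. Sandwiching produces $e_2(\A)=e_4(\A)=2=\beta_2(\A)$, as desired. There is really no hard step here: the theorem is a clean consequence of the three results already recorded in the excerpt, with the $4$-net hypothesis simultaneously forcing $\beta_2(\A)\ge 2$ (via Proposition \ref{prop:betap}) and $e_2,e_4\ge 2$ (via Theorem \ref{thm:be3}), while the assumption $\beta_2(\A)\le 2$ is exactly what is needed to make the modular bound \eqref{eq:bound bis} tight for $p=2$ and $s=1,2$.
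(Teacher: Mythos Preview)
Your argument is correct. The paper itself does not supply a proof of this theorem---it is stated as a result from \cite{PS-betamilnor}---but the surrounding text assembles precisely the three ingredients you use (Theorem~\ref{thm:be3}\eqref{bb2}, Proposition~\ref{prop:betap}(2), and the modular bound \eqref{eq:bound bis}), and your sandwiching of $e_2(\A)$ and $e_4(\A)$ between $k-2=2$ and $\beta_2(\A)=2$ is exactly the intended derivation.
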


The above results, and many other computations naturally lead to 
the following conjecture.

\begin{conjecture}[\cite{PS-betamilnor}]
\label{conj:mf}
The characteristic polynomial of the degree $1$ 
algebraic monodromy for the Milnor fibration of an 
arrangement $\A$ of rank at least $3$ 
is given by the following combinatorial formula:
\begin{equation}
\label{eq:delta arr}
\Delta_{\A}(t)=(t-1)^{\abs{\A}-1} ((t+1)(t^2+1))^{\beta_2(\A)} (t^2+t+1)^{\beta_3(\A)}. 
\end{equation} 
\end{conjecture}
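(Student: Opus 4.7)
The plan is to combine the factorization \eqref{eq:delat} with the Pereira--Yuzvinsky restriction on multinets and the modular bound \eqref{eq:bound bis}. The conjectural identity \eqref{eq:delta arr} is equivalent to three assertions: (i) $e_r(\A)=0$ for every divisor $r$ of $|\A|$ with $r\notin\{2,3,4\}$, (ii) $e_3(\A)=\beta_3(\A)$, and (iii) $e_2(\A)=e_4(\A)=\beta_2(\A)$.  The first is a vanishing statement, while the latter two pair the upper bound \eqref{eq:bound bis} with a matching lower bound.

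The vanishing assertion (i) would be attacked as follows. By \eqref{eq:h1milnor}, a nonzero value of $e_r(\A)$ forces the diagonal character $\rho_r\in \Hom(\pi_1(U),\C^*)$ to lie on some essential component of $\VV_s(U)$. Essential components through the origin are of the form $\exp(P_{\NN})$ for a multinet $\NN$ on $\A$, by \eqref{eq:rsa} together with the tangent-cone correspondence between complex resonance and characteristic varieties. A direct coordinate computation in the basis $\{u_{\alpha}-u_1\}$ shows that $\rho_r\in\exp(P_{\NN})$ forces $r\mid k$, where $k$ is the number of parts of $\NN$; since $k\in\{3,4\}$ by Pereira--Yuzvinsky, only $r\in\{2,3,4\}$ survives through this route.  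The remaining candidate divisors -- prime powers $p^s$ with $p\notin\{2,3\}$, or with $p\in\{2,3\}$ and $s$ sufficiently large -- are controlled via \eqref{eq:bound bis} by the Aomoto--Betti numbers, so an auxiliary vanishing $\beta_p(\A)=0$ for $p\notin\{2,3\}$, a modular refinement of Proposition~\ref{prop:betap} obtained by combining the modular analogue of the Falk--Yuzvinsky description of $\RR_1(\A,\k)$ with Pereira--Yuzvinsky, would close these cases.  Translated components of $\VV_s(U)$ produced by pointed multinets via \cite{DeS-plms} must still be ruled out as possible hosts of $\rho_r$; since such components are translated by finite-order characters supported on proper sub-arrangements, an argument exploiting the global symmetry of $\rho_r$ should suffice.

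For the equalities (ii) and (iii), the upper bound is immediate from \eqref{eq:bound bis}, and the lower bound is furnished by Theorem~\ref{thm:be3}: if $\A$ supports a reduced $k$-multinet, then the associated pencil $f\colon M(\A)\to S$ places $\rho_k$, and in the prime-power case $k=p^s$ also each $\rho_{p^r}$ for $1\le r\le s$, inside $f^{*}(H^1(S,\C^*))\subseteq \VV_{k-2}(U)$.  Matching these values to the full Aomoto--Betti numbers requires upgrading Theorems~\ref{thm:beta3} and~\ref{thm:beta4} to equalities without their auxiliary hypotheses on multiplicities in $L_2(\A)$.  The hard part, and the main obstacle to a full proof, is precisely this last step: when a rank-two flat of multiplicity $3r$ with $r>1$ is present, a reduced $3$-multinet may be replaced by a non-reduced one, and the clean correspondence between the depth of the diagonal character in $\VV_{*}(U)$ and the combinatorial invariant $\beta_p(\A)$ can slacken.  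Producing pencils in these degenerate configurations that realize the full depth of $\rho_{p^s}$, perhaps through a careful case analysis of the sporadic $(4,3)$-net on the Hessian arrangement, the known families of non-reduced $3$-multinets, and the simplicial $(3,4)$-multinets that are not $3$-nets, is where substantive new input will most likely be required.
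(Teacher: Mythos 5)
The statement you were asked to prove is Conjecture~\ref{conj:mf}, an \emph{open conjecture} from \cite{PS-betamilnor}; the paper offers no proof, only a list of classes of arrangements for which it has been verified. There is therefore no argument in the paper to compare yours against, and your text --- which you yourself end by identifying ``the main obstacle to a full proof'' --- is a research program rather than a proof. That said, your reduction of \eqref{eq:delta arr} to the three assertions (i)--(iii), and the pairing of the modular upper bound \eqref{eq:bound bis} with the multinet lower bound of Theorem~\ref{thm:be3}, is exactly the strategy behind the partial results (Theorems~\ref{thm:beta3} and~\ref{thm:beta4}) that the paper does record, and you correctly locate where those results stop.

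Several steps you present as routine are, however, themselves open or unavailable. First, the ``modular analogue of the Falk--Yuzvinsky description of $\RR_1(\A,\k)$'' that you invoke to obtain $\beta_p(\A)=0$ for $p\ge 5$ does not exist: Section~\ref{subsec:os alg} emphasizes that in positive characteristic the components of $\RR_1(\A,\k)$ need not even be linear, and the vanishing of $\beta_p$ for $p\ge 5$ is itself only conjectural. Second, excluding the diagonal characters $\rho_r$ from translated components of $\VV_s(U)$ is precisely the open problem flagged at the end of Section~\ref{subsec:char var}; ``global symmetry of $\rho_r$'' is not an argument, since the translated tori of \cite{DeS-plms} are translated by characters of order $m_H$ and nothing a priori prevents a power of the diagonal character from landing on one --- this is essentially the mechanism behind the torsion phenomena of Theorem~\ref{thm:polar tors}. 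Third, your claim that $\rho_r\in\exp(P_{\NN})$ forces $r\mid k$ is clear only for reduced multinets; for non-reduced ones the multiplicities $m_H$ enter the vectors $u_\alpha$ spanning $P_{\NN}$, and the divisibility statement needs separate justification. None of this undercuts your diagnosis --- you have identified the right obstacles --- but the proposal must not be mistaken for a proof of what remains an open conjecture.
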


This conjecture has been verified for several large classes 
of arrangements, including
\begin{enumerate} 
\item all sub-arrangements of non-exceptional Coxeter arrangements, \cite{MP}; 
\item all complex reflection arrangements, \cite{MPP, Di16b, DiSt2}; 
\item certain types of complexified real arrangements, \cite{Yo, TY, BS16}.
\end{enumerate}

\section{Further topological invariants of the Milnor fiber}
\label{sect:topinv}

\subsection{Torsion in the homology of the Milnor fiber}
\label{subsec:torsion}

A long-standing question, raised by Randell and Dimca--N\'emethi 
among others, asks whether the Milnor fiber of a complex hyperplane 
arrangement can have non-trivial torsion in (integral) homology.  

As a first step towards answering this question, it was shown by 
Cohen, Denham, and Suciu  \cite{CDS03} that the first homology 
of the Milnor fiber of a multi-arrangement may have torsion.  
In recent work of Denham and Suciu \cite{DeS-plms}, these 
examples were recast in a more general framework, leading to 
hyperplane arrangements $\B$ for which $H_q(F(\B),\Z)$ has 
torsion, in some degree $q>1$. The precise result reads as follows.

\begin{theorem}[\cite{DeS-plms}]
\label{thm:polar tors}
Suppose $\A$ admits a pointed multinet, with distinguished 
hyperplane $H$.  Let $p$ be a prime dividing the multiplicity $m_H$. 
There is then a choice of multiplicities $m'$ on the deletion 
$\A' =\A\setminus \{H\}$ such that $H_q(F(\B),\Z)$ has 
$p$-torsion, where $\B$ is the arrangement obtained from 
the multi-arrangement $(\A', m')$ by a process of polarization, 
and $q=1+\abs{\set{K\in \A':  m'_K\ge 3}}$.
\end{theorem}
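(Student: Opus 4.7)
The plan is to combine three ingredients in sequence. First, I would exploit the translated-torus structure in $\VV_1(\A')$ that a pointed multinet produces. Second, I would choose a multiplicity vector $m'$ on $\A'$ so that the cyclic cover of $U(\A')$ encoded by $(\A', m')$ meets that translated component at a character of order divisible by $p$. Third, I would invoke polarization, which converts integral $p$-torsion in the first homology of the resulting generalized Milnor fiber into integral $p$-torsion in the $q$-th homology of the honest Milnor fiber $F(\B)$.

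For the first step, recall from the construction at the end of Section \ref{subsec:char var} that the pointed multinet $\NN$ on $\A$ with distinguished hyperplane $H$ produces a one-dimensional irreducible component $W \subset \VV_1(\A')$ which is a coset of an algebraic subtorus of $H^1(M(\A'), \C^*)$ by a character of order exactly $m_H$. Since $p \mid m_H$, the component $W$ contains characters of order $p$.

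For the second step, I would engineer multiplicities $m'_K$ on $\A'$ proportional, within each part of $\NN \cap \A'$, to the original multinet weights of $\NN$, rescaled so that the diagonal character $\rho_{m'} \colon \pi_1(M(\A')) \to \C^*$ defined by $x_K \mapsto \exp(2\pi \ii\, m'_K/N)$ factors through the pencil $f' \colon M(\A') \to \CP^1 \setminus D'$ associated with the deletion and lands on $W$ at a character of order $p$. The generalized Milnor fiber $F(\A', m') := \big(\prod_K f_K^{m'_K}\big)^{-1}(1)$ is then a finite cyclic cover of $U(\A')$ classified by $\rho_{m'}$. Because $\rho_{m'}$ lies on a \emph{translated}, rather than subtorus-through-$\bo$, component of $\VV_1(\A')$, the $\F_p$-rank of $H_1$ of this cover will exceed what $\C$-coefficients predict; combining the modular inequality \eqref{eq:modular ineq} with the universal coefficient theorem will then force $p$-torsion in $H_1(F(\A', m'), \Z)$.

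For the third step, polarization replaces each $K \in \A'$ with $m'_K \ge 2$ by auxiliary generic hyperplanes in additional coordinates, yielding a simple arrangement $\B$ whose Milnor fiber $F(\B)$ admits a description as an iterated total space built from $F(\A', m')$ and local Boolean pieces. This description allows one to trace its integral homology back to that of $F(\A', m')$ with a controlled degree shift: hyperplanes of multiplicity $2$ contribute only a $\C^*$-factor that does not shift the $p$-torsion, whereas each hyperplane of multiplicity at least $3$ contributes exactly one extra degree of shift, placing the $p$-torsion in $H_q(F(\B), \Z)$ with $q = 1 + \abs{\{K \in \A' : m'_K \ge 3\}}$. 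The main obstacle is precisely this torsion-propagation step: one must track integral $p$-torsion carefully through the polarization construction, verifying that no cancellation or spreading across degrees occurs in the associated Leray/Mayer--Vietoris spectral sequence, so that the single degree $q$ receives all of the torsion produced in step two.
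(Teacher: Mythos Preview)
This survey does not prove Theorem~\ref{thm:polar tors}; the statement is quoted from \cite{DeS-plms} and followed only by an illustrative example. There is therefore no proof in the present paper against which to compare your attempt.

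That said, your three-step architecture---translated subtorus in $\VV_1(\A')$ from the pointed multinet, then $p$-torsion in $H_1$ of the multi-arrangement Milnor fiber, then polarization to push the torsion up to degree $q$---is indeed the strategy of \cite{DeS-plms}. One point in your step (ii) does not work as written. Inequality \eqref{eq:modular ineq} is an \emph{upper} bound on $\dim_{\C} H_i(X,\C_{\rho})$ in terms of an $\F_p$-Aomoto Betti number; it gives you no information about $\dim_{\F_p} H_1$ of the cyclic cover, and so cannot by itself yield the inequality $b_1\big(F(\A',m');\F_p\big) > b_1\big(F(\A',m');\C\big)$ that forces $p$-torsion via the universal coefficient theorem. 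What is actually needed is a direct comparison of the two Betti numbers of the cover: the $\C$-count comes from \eqref{eq:equiv} and the position of the classifying character relative to the $\C$-characteristic varieties, while the $\F_p$-count comes from the analogous formula with the mod-$p$ jump loci. The translated component $W$ is relevant precisely because its translating character has order divisible by $p$, so upon reduction mod $p$ it behaves like a component through the identity and contributes an extra jump over $\F_p$ that is absent over $\C$. Your step (iii) is headed in the right direction, but you should expect the degree shift in \cite{DeS-plms} to come from an explicit product decomposition of the polarized complement and a K\"unneth argument rather than from a spectral sequence whose differentials you would then have to control.
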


For instance, if $\A'$ is the deleted ${\rm B}_3$ 
arrangement mentioned in \S\ref{subsec:char var}, then 
a suitable choice of multiplicities $m'$ produces an 
arrangement $\B$ of $27$ hyperplanes in $\C^8$ such 
that $H_6(F(\B),\Z)$ has $2$-torsion. 
Nevertheless, it is still not known whether there is 
a hyperplane arrangement $\A$ (without multiplicities) 
such that $H_1(F(\A),\Z)$ has non-trivial torsion. 
For more on this topic, we refer to \cite{DeS-indam}.

\subsection{The homology of the boundary of the Milnor fiber}
\label{subsec:char poly bdF}

A detailed study of the boundary of the Milnor fiber of a non-isolated 
surface singularity was done by N\'{e}methi and Szil\'{a}rd in \cite{NS}.
When applied to arrangements in $\C^3$, their work yields the following 
result. 

\begin{theorem}[\cite{NS}]  
\label{thm:charpoly}
Let $\A$ be an arrangement of $n$ planes in $\C^3$, and 
let  $\bdF$ be the boundary of its Milnor fiber.  
The characteristic polynomial of the algebraic monodromy 
acting on $H_1(\bdF,\C)$ is equal to the product 
\[
\prod_{X\in L_2(\A)} (t-1) (t^{\gcd (\abs{\A_X},n)} -1)^{\abs{\A_X}-2}.
\]
\end{theorem}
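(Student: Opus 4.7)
The plan is to exploit the cyclic cover $\pi\colon \bdF \to \bdU$ from diagram \eqref{eq:cd}, classified by the homomorphism $\bar\delta\colon \pi_1(\bdU) \surj \Z_n$ with $\overline{x}_H \mapsto 1$ and $y_c \mapsto 0$, together with the graph-manifold structure of $\bdU$ described in Section~\ref{subsec:classify}. That structure decomposes $\bdU$ into vertex pieces $M_v = S^1 \times \Sigma_v$ indexed by the bipartite graph $\Gamma$ of lines and multiple points of $\bar\A$, each $\Sigma_v$ a punctured sphere, glued along edge tori $T^2$ for each flag $(\ell,X)$. I would lift this decomposition to $\bdF$ via $\pi$ and compute the characteristic polynomial of $h_*$ as a multiplicative product of local contributions, in the spirit of A'Campo's formula applied to the monodromy zeta function on an embedded resolution of $\bar\A \subset \CP^2$ obtained by blowing up all multiple points.

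The heart of the calculation is to identify the cover $\pi^{-1}(M_v)\to M_v$ on each piece. For a line vertex $\ell$, the Seifert fiber of $M_\ell$ is (freely homotopic to) the meridian $\overline{x}_\ell$, so its image under $\bar\delta$ is $1$; hence $\pi^{-1}(M_\ell)$ is connected, again of the form $S^1 \times \Sigma_\ell$ with $\Z_n$ acting by rotation on the first factor. For a point vertex $X \in L_2(\A)$ with $q = \abs{\A_X}$, the Seifert fiber of $M_X$ is homologous to $\sum_{H\in\A_X}\overline{x}_H$ (it is a Hopf-type loop encircling the $q$ lines through $X$) and hence maps to $q \in \Z_n$ under $\bar\delta$; consequently $\pi^{-1}(M_X)$ splits into $\gcd(q,n)$ connected components, cyclically permuted by $h$, each an $(n/\gcd(q,n))$-fold cyclic cover of $M_X = S^1 \times \Sigma_X$.

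Assembling the local data via Mayer--Vietoris, the edge tori contribute trivially since $\chi(T^2)=0$, and the line-vertex contributions cancel against those of the incident edges (the Euler-characteristic count on $\Sigma_\ell$ is exactly the number of flags at $\ell$). Each point vertex $X$ then contributes the factor $(t-1)(t^{\gcd(q,n)}-1)^{q-2}$: the $(t-1)$ records the free $S^1$-rotation in the quotient, while the exponent $q-2 = -\chi(\Sigma_X)-1$ reflects the $q$-punctured sphere fiber, with the twist $t^{\gcd(q,n)}$ encoding the cyclic permutation of the $\gcd(q,n)$ sheets by $h$. Multiplying over all $X\in L_2(\A)$ yields the stated formula.

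The main obstacle is twofold: first, the bookkeeping required to verify the cancellation of line-vertex and edge contributions, cleanest when one writes $\det(t\cdot\id - h_*)$ as an alternating product of local monodromy zeta functions and invokes additivity of $\chi$ on a refined CW-decomposition of $\bdU$; second, the local analysis at each flat $X$, where one must track the restriction of $\bar\delta$ to $\pi_1(M_X)\cong \Z \oplus F_{q-1}$ carefully enough to pin down the induced $\Z_n$-action on $H_1$ of the lifted piece and extract the correct cyclotomic factor $t^{\gcd(q,n)} - 1$ rather than merely $t^n - 1$. A convenient alternative is to recast the whole argument in terms of an embedded resolution $\widetilde{\CP^2}\to \CP^2$ of the arrangement, in which the exceptional divisor $E_X$ over each multiple point appears with multiplicity $q=\abs{\A_X}$ in the total transform of $Q(\A)$; A'Campo's formula then localizes the monodromy zeta function to a product of contributions from the irreducible components of the resolution divisor, and a direct computation shows that only the contributions of the $E_X$ survive after cancellation against the strict transforms of the lines, each yielding the claimed local factor.
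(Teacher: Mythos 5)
The paper itself gives no proof of this theorem --- it is quoted from N\'emethi--Szil\'ard \cite{NS} --- so your sketch has to stand on its own, and its overall frame (the graph-manifold decomposition of $\bdU$ plus the cyclic cover $\pi\colon \bdF\to\bdU$) is indeed a viable route. But the step that is supposed to produce the factor $t^{\gcd(q,n)}-1$ is wrong. You claim that for a point vertex $X$ with $q=\abs{\A_X}$ the preimage $\pi^{-1}(M_X)$ splits into $\gcd(q,n)$ components cyclically permuted by $h$. It does not: $\pi_1(M_X)$ contains the meridians $\overline{x}_H$ of the $q$ lines through $X$, each of which is sent to $1\in\Z_n$ by $\bar\delta$, so $\bar\delta$ restricted to $\pi_1(M_X)$ is already surjective and $\pi^{-1}(M_X)$ is connected. (What is true is that the preimage of a single Seifert fibre of $M_X$ has $\gcd(q,n)$ components.) The $\gcd$ actually enters through eigenvalues, not components. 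Writing $H_1(\bdF,\C)=\bigoplus_{j=0}^{n-1}H_1(\bdU,\C_{\rho^j})$ with $\rho^j(\overline{x}_H)=e^{2\pi\ii j/n}$, $\rho^j(y_c)=1$, a vertex or edge piece contributes nothing unless $\rho^j$ is trivial on its Seifert fibre; for a line vertex the fibre is $\overline{x}_\ell$, so all line and edge-torus pieces die for $j\ne 0$, while for $M_X$ the fibre is $\sum_{H\supset X}\overline{x}_H$, which is killed exactly when $n\mid jq$, i.e., for $\gcd(q,n)$ values of $j$. For each such $j\ne 0$ the local system on the $q$-punctured sphere is nontrivial on every boundary loop, giving $H_1(M_X,\C_{\rho^j})\cong \C^{q-2}$ (note that $q-2=-\chi$, not $-\chi-1$ as you wrote), and Mayer--Vietoris then yields $\bigl(\tfrac{t^{\gcd(q,n)}-1}{t-1}\bigr)^{q-2}$ from these $j$, which combines with the $(t-1)^{q-1}$ coming from $j=0$ (where $b_1(\bdU)=\sum_X(q_X-1)$) to give the stated local factor.

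The assembly step also has a gap you acknowledge but do not close. The Euler-characteristic/multiplicativity bookkeeping you describe computes the full Lefschetz zeta function $\prod_i\det(t\cdot\id-h_*|_{H_i(\bdF)})^{(-1)^{i+1}}$; since $\bdF$ is a closed oriented $3$-manifold, Poincar\'e duality makes the $H_2$ factor the reciprocal polynomial of the $H_1$ factor, so the zeta function alone does not isolate $\det(t\cdot\id-h_*|_{H_1})$ without an extra argument. Similarly, A'Campo's formula applied to an embedded resolution of $\bar\A\subset\CP^2$ computes the zeta function of the monodromy on the Milnor \emph{fibre}, not on its boundary; translating resolution (plumbing) data into monodromy information for $\bdF$ is precisely the nontrivial content of \cite{NS} and cannot be invoked as a black box here. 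The cleanest complete argument is the equivariant one sketched above: decompose $H_1(\bdF,\C)$ into $\rho^j$-eigenspaces and compute each twisted homology group of $\bdU$ by Mayer--Vietoris over the graph.
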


In particular, the Betti number $b_1(\bdF)$ is determined by very 
simple combinatorial data associated to the arrangement, namely, 
the multiplicities of the rank $2$ flats. 
In general, torsion can occur in the first homology of $\bdF$.  For instance,  
as noted in \cite{NS}, 
if $\A$ is an arrangement of $4$ planes in general position 
in $\C^3$, then $H_1(\bdF, \Z)=\Z^{6} \oplus \Z_4$. 
For a generic arrangement of $n$ planes in $\C^3$, 
it is conjectured in \cite{Su-toul} that 
\begin{equation}
\label{eq:h1bdf gen}
H_1(\bdF, \Z)=\Z^{n(n-1)/2} \oplus \Z_{n}^{(n-2)(n-3)/2}.
\end{equation}

For an arbitrary arrangement $\A$ in $\C^3$, it is an open 
question whether all the torsion in $H_1(\bdF, \Z)$ 
consists of $\Z_n$-summands, where $n=\abs{\A}$. 
Likewise, it is an open question whether such torsion 
is combinatorially determined.

\subsection{Complement, boundary, and intersection lattice}
\label{subsec:homeos}

Once again, let $\A$ be an arrangement in $\C^3$, with intersection lattice 
$L(\A)$, and let $U$ be its projectivized complement.  Recall that the 
boundary manifold, $\bdU$, is a closed graph manifold, with  
underlying graph $\Gamma$ the bipartite graph whose vertices 
correspond to the lines and the intersection points of the projectivized 
line arrangement $\bar\A$.

Now suppose $\A'$ is another arrangement in $\C^3$, and that $U$ 
is homeomorphic to $U'$.  It follows that $\bdU$ is homeomorphic 
to $\bdU'$, or, equivalently (since both graph manifolds are either $S^3$  
or have positive first Betti  number),  $\bdU$ is homotopy equivalent to $\bdU'$. 
Using Waldhausen's classification of graph manifolds, 
Jiang and Yau \cite{JY93, JY98} conclude that the underlying graphs, 
$\Gamma$ and $\Gamma'$ must be isomorphic, and thus the 
corresponding intersection lattices, $L(\A)$ and $L(\A')$, must also be isomorphic.

\begin{figure}
\subfigure{%
\label{fig:f1}%
\begin{minipage}[t]{0.4\textwidth}
\setlength{\unitlength}{15pt}
\begin{picture}(5,6.1)(-3.6,-1.1)
\put(2,2){\oval(6.5,6)[t]}
\put(2,2){\oval(6.5,7)[b]}
\put(0,0){\line(1,1){4}}
\put(-1,2){\line(1,0){6}}
\put(0,4){\line(1,-1){4}}
\put(0.5,-0,5){\line(0,1){5}}
\put(3.5,-0,5){\line(0,1){5}}
\put(-1.9,3){\makebox(0,0){$\ell_0$}}
\put(0.5,-1){\makebox(0,0){$\ell_1$}}
\put(3.5,-1){\makebox(0,0){$\ell_2$}}
\put(4.5,-0.4){\makebox(0,0){$\ell_3$}}
\put(4.6,1.5){\makebox(0,0){$\ell_4$}}
\put(4.5,4.3){\makebox(0,0){$\ell_5$}}
\end{picture}\end{minipage}
}
\subfigure{%
\label{fig:f2}%
\begin{minipage}[t]{0.4\textwidth}
\setlength{\unitlength}{15pt}
\begin{picture}(5,6.1)(-3.6,-1.1)
\put(2,2){\oval(6.5,6.3)[t]}
\put(2,2){\oval(6.5,7)[b]}
\put(-1,0.5){\line(1,0){6}}
\put(-1,3.5){\line(1,0){6}}
\put(0.5,-0,5){\line(0,1){5.3}}
\put(3.5,-0,5){\line(0,1){5.3}}
\put(-0.95,-0.25){\line(1,1){5}}
\put(-1.9,3){\makebox(0,0){$\ell_0$}}
\put(0.5,-1){\makebox(0,0){$\ell_1$}}
\put(3.5,-1){\makebox(0,0){$\ell_2$}}
\put(4.5,1){\makebox(0,0){$\ell_3$}}
\put(4.5,4){\makebox(0,0){$\ell_4$}}
\put(2.1,2){\makebox(0,0){$\ell_5$}}
\end{picture}
\end{minipage}
}
\caption{The arrangements $\A$ and $\A'$}
\label{fig:falk}
\end{figure}

\begin{example}
\label{ex:falk arrs}
Let $\A$ and $\A'$ be the pair of arrangements in $\C^3$ 
whose projectivizations are depicted in Figure \ref{fig:falk}. 
Both $\bar\A$ and $\bar\A'$ have $2$ triple points and $9$ double points, 
yet the two intersection lattices are non-isomorphic:  the two triple 
points of $\A$ lie on a common line, whereas the two triple points 
of $\A'$ don't.  Nevertheless, as first noted by L.~Rose and H.~Terao 
in an unpublished note, the corresponding Orlik--Solomon algebras 
are isomorphic.  In fact, as shown by Falk in \cite{Fa93}, the two projective 
complements, $U$ and $U'$, are homotopy equivalent. 

Now, since $L(\A)\not\cong L(\A')$, we know from \cite{JY93, JY98} 
that the corresponding boundary manifolds, $\bdU$ and $\bdU'$, 
are not homotopy equivalent, even though $b_1(\bdU)=b_1(\bdU')=13$. 
In fact, as noted in \cite[Ex.~5.3]{CS08}, the two manifolds may be 
distinguished by their (multi-variable) Alexander polynomials:  
$\Delta_{\bdU}(t)$ has $7$ distinct factors, whereas $\Delta_{\bdU'}(t)$ has 
$8$ distinct factors.  The characteristic varieties $\VV_1(\bdU)$ and 
$\VV_1(\bdU')$ are the zero sets of these polynomials. Hence, the first 
variety consists of $7$ codimension-$1$ subtori in $(\C^*)^{13}$, 
while the second one consists of $8$ such subtori.  
This shows, once again, that $\bdU \not\simeq \bdU'$.
\end{example}

In \cite{JY93}, Jiang and Yau conjecture that the homeomorphism type 
of $U(\A)$ is determined by isomorphism type of $L(\A)$, for any arrangement 
$\A$ in $\C^3$.   Motivated by the above considerations, we propose a 
more precise conjecture.  

\begin{conjecture}
\label{conj:comb}
Let $\A$ and $\A'$ be two central arrangements in $\C^3$.  The following 
conditions are equivalent:
\begin{enumerate}
\item 
$U(\A)\cong U(\A')$.
\item 
$\bdU(\A)\simeq \bdU(\A')$.
\item 
$\Delta_{\bdU(\A)}(t)=\Delta_{\bdU(\A')}(t)$.
\item 
$\Gamma(\A) \cong \Gamma (\A')$.
\item 
$L(\A)\cong L(\A')$.  
\end{enumerate}
\end{conjecture}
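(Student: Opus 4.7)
The plan is to close the cycle $(1) \Rightarrow (2) \Rightarrow (3) \Rightarrow (4) \Rightarrow (5) \Rightarrow (1)$. Three of these links are essentially formal, one requires inverting the explicit formula that expresses $\Delta_{\bdU}(t)$ in terms of the graph $\Gamma$, and one is the Jiang--Yau conjecture itself, which is the principal obstacle.

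I would dispose first of the formal links. For $(1) \Rightarrow (2)$, the boundary manifold is intrinsically determined by $U$: the Durfee uniqueness theorem for regular neighborhoods implies that the exterior $\Uc$ and its boundary $\bdU$ depend, up to homeomorphism, only on $U$, so a homeomorphism of complements descends to one of boundaries, in particular a homotopy equivalence. For $(2) \Rightarrow (3)$, the multi-variable Alexander polynomial is a homotopy invariant of a finite CW-complex (up to units in the Laurent polynomial ring). For $(4) \Rightarrow (5)$, a bipartite-preserving isomorphism $\Gamma(\A) \cong \Gamma(\A')$ is precisely the data of an isomorphism of truncated lattices $L_{\le 2}(\A) \cong L_{\le 2}(\A')$, and adjoining the unique top flat (common to every rank-$3$ arrangement) extends this to a full lattice isomorphism $L(\A) \cong L(\A')$.

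For $(3) \Rightarrow (4)$ I would exploit the product decomposition of $\Delta_{\bdU}(t)$ developed in \cite{CS06, CS08}: its irreducible factors are indexed by the cycles and vertices of $\Gamma$, with exponents controlled by the vertex valences (equivalently, the multiplicities of the rank-$2$ flats). The task is then a combinatorial inversion: read off the valence sequence from the degrees and multiplicities of the factors, then the cycle rank from the total number of factors, and finally reassemble the bipartite incidence pattern. Example~\ref{ex:falk arrs} confirms that this factorization data already suffices to distinguish non-isomorphic $\Gamma$ in the smallest ambiguous case, but carrying out the inversion in general is delicate and likely requires a case analysis based on low-valence configurations, together with an argument that the distinguished coordinate directions on $H^1(\bdU,\C^*)$ (those indexed by the lines of $\bar\A$) can be recovered from the polynomial.

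The genuine main obstacle is $(5) \Rightarrow (1)$, the Jiang--Yau conjecture itself. The natural strategy is to use the already-established chain $(5) \Rightarrow (4) \Rightarrow (2)$ to produce a preferred homeomorphism $\bdU(\A) \cong \bdU(\A')$, and then to argue that the pair $(\bdU, L(\A))$ determines the compact exterior $\Uc$ up to homeomorphism via a rigidity theorem for the attaching maps of the $2$- and higher-dimensional handles in a handle decomposition of $\Uc$ rel $\bdU$. Establishing such a rigidity result is precisely where the essential difficulty lies and is why the statement remains conjectural; any partial progress on the Jiang--Yau conjecture would immediately feed into the proof.
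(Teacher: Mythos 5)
This statement is labeled a \emph{conjecture} in the paper, and the paper supplies no proof of it; so there is nothing to compare your argument against except the partial implications the surrounding discussion already establishes. Your road map correctly identifies the logical skeleton, and the links you call ``formal'' are essentially the known ones: $(1)\Rightarrow(2)$ holds because $\bdU$ is the boundary of the compact manifold whose interior is $U$ (though note that Durfee's theorem gives uniqueness of the regular neighborhood for a fixed embedding $W\subset\CP^2$; deducing that a homeomorphism of the \emph{open} complements induces one of the boundaries is a separate, if standard, step about tame ends); $(2)\Rightarrow(3)$ is homotopy invariance of the Alexander polynomial up to the usual indeterminacy; and $(4)\Leftrightarrow(5)$ is the observation that $\Gamma$ encodes $L_{\le 2}(\A)$. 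Moreover, the paper's route from $(2)$ to $(4)$ is Waldhausen's classification of graph manifolds (via Jiang--Yau \cite{JY93, JY98}), which closes the subcycle $(1)\Rightarrow(2)\Rightarrow(4)\Rightarrow(5)$ without ever passing through $(3)$.

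The two remaining links are genuine gaps, and they are exactly why the statement is a conjecture rather than a theorem. You flag $(5)\Rightarrow(1)$ correctly as the Jiang--Yau conjecture, but your proposed ``rigidity theorem for attaching maps of handles rel $\bdU$'' is not an argument --- no such rigidity statement is known, and producing one \emph{is} the conjecture. More problematically, you present $(3)\Rightarrow(4)$ as a tractable if ``delicate'' combinatorial inversion. It is not known to be one: the cited computations in \cite{CS06, CS08} give a formula for $\Delta_{\bdU}(t)$ in terms of $\Gamma$, and Example~\ref{ex:falk arrs} shows the polynomial separates one specific pair, but no general argument exists that the factorization data determines the bipartite graph up to isomorphism (different graphs could a priori yield the same multiset of factors), nor that the coordinate directions on $H^1(\bdU,\C^*)$ corresponding to lines are recoverable. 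Presenting these two steps as parts of a proof, rather than as the open content of the conjecture, is the essential flaw in the proposal.
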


\subsection{Milnor fiber and intersection lattice}
\label{subsec:falk mf}

We now show that there are invariants which can 
tell apart homologically equivalent Milnor fibers of arrangements.  

\begin{example}
\label{ex:falk mf}
Let $\A$ and $\A'$ be the two arrangements from Example \ref{ex:falk arrs}, 
and let $F$ and $F'$ be the corresponding Milnor fibers.  It is readily seen 
that neither of the two arrangements supports an essential multinet.  
Since both $\bar\A$ and $\bar\A'$ have only double and triple points, 
Corollary \ref{cor:triple} shows that, in both cases, the characteristic 
polynomial of the algebraic monodromy 
acting on the Milnor fiber is $(t-1)^5$.  
By the same token, Theorem \ref{thm:charpoly} shows that, in both cases, 
the characteristic polynomial of the algebraic monodromy 
acting on the boundary of the Milnor fiber is  
$(t-1)^{13}(t^2+t+1)^2$.

It can also be verified that $H_1(F,\Z)=H_1(F',\Z)=\Z^5$.  
Nevertheless, the two Milnor fibers are {\em not}\/ homotopy equivalent.  
In fact, we claim that $\pi_1(F)\not\cong \pi_1(F')$.  
To establish this claim, we consider the characteristic varieties 
of $F$ and $F'$, lying in the character torus $(\C^*)^5$.  
A computation shows that
\begin{align*}
\VV_1(F)&=\{  t_1=t_4=t_5=1\}\cup 
\{t_3^3t_5^{-1} =
t_3^6t_4^{-1} =
t_2 t_3^{-1} =1
\},\\
\VV_2(F)&=\{\bo , (1,\omega,\omega, 1, 1), (1,\omega^{2},\omega^{2}, 1, 1)\},
\\
\intertext{where $\omega=\exp(2\pi \ii /3)$, while}
\VV_1(F')&=\{ t_1 t_4= t_1 t_5 = t_3 t_5^{-3}=1 \}\cup 
\{t_2 t_4 = t_3 t_5^{2} = t_4 t_5= 1
\},\\
\VV_2(F')&=\{\bo\}.
\end{align*}

Note that the two, $2$-dimensional components 
of $\VV_1(F)$ meet at the three characters of order $3$ comprising $\VV_2(F)$. 
The variety $\VV_1(F')$ also consists of two, $2$-dimensional subtori, 
but these subtori only meet at the origin, which is the 
only point comprising $\VV_2(F')$.  
\end{example}

In view of these considerations, we conclude with a (rather optimistic) 
conjecture, which can be viewed as a Milnor fiber analogue of
Conjecture~\ref{conj:comb}.

\begin{conjecture}
\label{conj:mf bdmf}
Let $\A$ and $\A'$ be two central arrangements in $\C^3$.  The following 
conditions are equivalent:
\begin{enumerate}
\item
$F(\A)\cong F(\A')$.
\item 
$\bdF(\A)\simeq \bdF(\A')$.
\item 
$L(\A)\cong L(\A')$.  
\end{enumerate}
\end{conjecture}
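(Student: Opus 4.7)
My plan is to establish the three equivalences along the cycle $(3)\Rightarrow (1)\Rightarrow (2) \Rightarrow (3)$. The first two links should be essentially formal consequences of Conjecture \ref{conj:comb} together with standard facts about cyclic covers and Stein tamed ends; the real content lies in $(2)\Rightarrow(3)$, which must reconstruct the intersection lattice from the homotopy type of a closed graph $3$-manifold.

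For $(3)\Rightarrow(1)$ I would bootstrap from the complement part of Conjecture \ref{conj:comb}: an isomorphism $L(\A)\cong L(\A')$ would yield a homeomorphism $U(\A)\cong U(\A')$ which must carry meridians to meridians, and hence identifies the diagonal classifying homomorphisms $\delta,\delta'\colon \pi_1(U)\surj \Z_n$ of \S\ref{subsec:classify}; the associated regular cyclic covers $F(\A)$ and $F(\A')$ are then homeomorphic by covering space theory. For $(1)\Rightarrow(2)$ I would use that the Milnor fiber is the interior of the compact manifold $\Fc$ with collared boundary $\bdF$ obtained by intersecting $F$ with a large ball, so that a homeomorphism of the interiors can be isotoped near infinity to match the two collar neighborhoods of the ends and hence to identify $\bdF(\A)$ with $\bdF(\A')$ up to homeomorphism, which certainly implies homotopy equivalence.

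The bulk of the work lies in $(2)\Rightarrow(3)$. My strategy would proceed in four steps. First, by \cite{NS} the manifold $\bdF(\A)$ is a closed orientable graph $3$-manifold, so after excluding degenerate cases of low complexity using the monodromy calculation of Theorem \ref{thm:charpoly}, Waldhausen's rigidity theorem for sufficiently large graph manifolds promotes the homotopy equivalence $\bdF(\A)\simeq\bdF(\A')$ to a homeomorphism and makes the JSJ plumbing graph an invariant. Second, I would recognize the free $\Z_n$-action by deck transformations as intrinsic to $\bdF$, for instance as the unique free cyclic action of order $n=\abs{\A}$ whose quotient has a bipartite plumbing graph with Seifert invariants matching those on the vertex manifolds of $\bdU$; this would recover the base $\bdU(\A)\cong\bdU(\A')$. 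Third, the Jiang--Yau argument \cite{JY93, JY98} would then extract from $\bdU(\A)\simeq \bdU(\A')$ an isomorphism of the underlying bipartite incidence graphs $\Gamma(\A)\cong \Gamma(\A')$. Finally, since $\A$ has rank $3$, the poset $L_{\le 2}(\A)$ encoded by $\Gamma(\A)$ together with its vertex multiplicities is the whole intersection lattice, whence $L(\A)\cong L(\A')$.

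The main obstacle is the second step above: characterizing the monodromy $\Z_n$-action on $\bdF$ intrinsically. A generic homotopy equivalence $\bdF(\A)\simeq\bdF(\A')$ has no reason to intertwine the two deck actions, and a priori nothing prevents two non-isomorphic intersection lattices from producing homeomorphic total spaces via inequivalent covering structures. To anchor the action one would likely need to combine the plumbing calculus on $\bdF$ with the cohomology jump loci of \S\ref{subsec:cv}--\S\ref{subsec:char var}, in the spirit of Example \ref{ex:falk mf} where the deeper characteristic variety $\VV_2(F)$ already distinguishes the two Falk arrangements: any self-homeomorphism of $\bdF$ permutes the components of its characteristic varieties, and with luck this constrains the $\Z_n$-orbit structure enough to pin down the cover. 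Coupled with the dependence of $(3)\Rightarrow(1)$ on the still-open Jiang--Yau conjecture \ref{conj:comb}, this rigidity issue is where I expect the main difficulty to lie, and it is also the place where the conjecture might first fail if it does.
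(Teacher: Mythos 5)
The statement you are trying to prove is Conjecture~\ref{conj:mf bdmf}, which the paper explicitly presents as an open (and ``rather optimistic'') conjecture; there is no proof in the paper to compare against. What you have written is a strategy outline rather than a proof, and by your own admission its load-bearing steps are unresolved. Concretely: your $(3)\Rightarrow(1)$ link presupposes Conjecture~\ref{conj:comb} (the Jiang--Yau conjecture that $L(\A)\cong L(\A')$ forces $U(\A)\cong U(\A')$), which is open, and then further assumes that the resulting homeomorphism carries meridians to meridians so as to intertwine the classifying homomorphisms $\delta$ and $\delta'$ --- an abstract lattice isomorphism gives you no such homeomorphism, let alone one respecting the meridian structure. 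Note that $(3)\Rightarrow(1)$ alone already implies that $b_1(F(\A))$ and the algebraic monodromy are combinatorially determined, i.e., it subsumes Conjecture~\ref{conj:mf}, itself a central open problem that the paper only verifies in special cases (Theorems~\ref{thm:beta3} and \ref{thm:beta4}).

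For $(2)\Rightarrow(3)$ you have correctly located the obstruction --- a homotopy equivalence $\bdF(\A)\simeq\bdF(\A')$ need not respect the two $\Z_n$-deck actions, so there is no way at present to descend to $\bdU(\A)\simeq\bdU(\A')$ and invoke Jiang--Yau --- but locating an obstruction is not the same as overcoming it, and your appeal to characteristic varieties of $\bdF$ ``with luck'' constraining the orbit structure is a hope, not an argument. The paper's own evidence shows how delicate this is: by Theorem~\ref{thm:charpoly} and Example~\ref{ex:falk mf}, the Falk pair has $L(\A)\not\cong L(\A')$ yet identical Betti numbers and monodromy characteristic polynomials for both $F$ and $\bdF$, so any proof of $(2)\Rightarrow(3)$ must exploit invariants strictly finer than those currently computed for $\bdF$. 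Your $(1)\Rightarrow(2)$ step also needs care (a homeomorphism of interiors gives at best a proper homotopy equivalence of tame ends, from which one must still extract $\bdF\simeq\bdF'$), though that is the least of the difficulties. In short, the proposal reduces one open conjecture to others and leaves the genuinely new content unproved.
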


\section*{Acknowledgement}
Most of this work was done while the author visited the Institute of 
Mathematics of the Romanian Academy in June, 2015 and 
in June, 2016.  He thanks IMAR for its hospitality, 
support, and excellent research atmosphere. 
He also thanks the organizers of the Eighth 
Congress of Romanian Mathematicians, 
held in Ia\c{s}i in June 2015 
for the opportunity to present a preliminary 
version of this work.  


\newcommand{\arxiv}[1]
{\href{http://arxiv.org/abs/#1}{arXiv:#1}}

\newcommand{\arx}[1]
{\texttt{\href{http://arxiv.org/abs/#1}{arXiv:}}
\texttt{\href{http://arxiv.org/abs/#1}{#1}}}

\newcommand{\doi}[1]
{\texttt{\href{http://dx.doi.org/#1}{doi:#1}}}

\renewcommand{\MR}[1]
{\href{http://www.ams.org/mathscinet-getitem?mr=#1}{MR#1}}

\newcommand{\MRh}[2]
{\href{http://www.ams.org/mathscinet-getitem?mr=#1}{MR#1 (#2)}}

\end{document}